\renewcommand{\le}{\leqslant}
\renewcommand{\ge}{\geqslant}
\definecolor{mno}{rgb}{0.5,0.1,0.5}
\newcommand{\R}{\mathds R}
\newcommand{\Pp}{\mathds P}
\newcommand{\I}{\mathds 1}
\newcommand{\Z}{\mathds Z}
\newtheorem{theorem}{Theorem}[section]
\newtheorem{lemma}[theorem]{Lemma}
\newtheorem{proposition}[theorem]{Proposition}
\newtheorem{corollary}[theorem]{Corollary}
\theoremstyle{definition}
\newtheorem{definition}[theorem]{Definition}
\newtheorem{remark}[theorem]{Remark}
\begin{document}
\allowdisplaybreaks
\title[Weighted Fractional Heat Semigroups] {\bfseries
Compactness and Density Estimates for Weighted Fractional Heat Semigroups}

\author{Jian Wang}
  \thanks{\emph{J.\ Wang:}
   College of Mathematics and Informatics $\&$ Fujian Provincial
Key Laboratory of Mathematical Analysis and its Applications
(FJKLMAA), Fujian Normal University, 350007 Fuzhou, P.R. China. \texttt{jianwang@fjnu.edu.cn}}

\date{}

\maketitle

\begin{abstract} We prove that the operator $L_0=-(1+|x|)^\beta(-\Delta)^{\alpha/2}$ with $\alpha\in(0,2)$, $d>\alpha$ and $\beta\ge0$ generates a compact semigroup or resolvent on $L^2(\R^d;(1+|x|)^{-\beta}\,dx)$, if and only if $\beta>\alpha$. When $\beta>\alpha$, we obtain two-sided asymptotic estimates for high order eigenvalues, and sharp bounds for the corresponding heat kernel.
\medskip

\noindent\textbf{Keywords:} weighted fractional Laplacian operator; compactness; heat kernel; (intrinsic) super Poincar\'e inequality
\medskip

\noindent \textbf{MSC 2010:} 60G51; 60G52; 60J25; 60J75.
\end{abstract}
\allowdisplaybreaks
\section{Introduction}
We consider analytic properties of the weighted fractional Laplacian operator \begin{equation}\label{e:ger}L_0=-(1+|x|)^\beta(-\Delta)^{\alpha/2}\end{equation} on $\R^d$, where $\alpha\in(0,2)$, $d>\alpha$ and $\beta\ge0$.  Obviously the operator $L_0$ is symmetric with respect to the measure $\mu(dx)=(1+|x|)^{-\beta}\,dx.$ We will prove that the operator $L_0$ generates a compact semigroup or resolvent on $L^2(\R^d;\mu)$ if and only if $\beta>\alpha$. In this case, the spectrum of $-L_0$ consists of a sequence of positive eigenvalues $\lambda_n$ diverging to $\infty$ as $n\to \infty$, and the associated semigroup $(P_t)_{t\ge0}$ admits a density function $p_\mu(t,x,y)$ with respect to $\mu$; namely,
$$P_tf(x)=\int_{\R^d} p_\mu(t,x,y)f(y)\,\mu(dy),\quad f\in L^2(\R^d;\mu).$$ Furthermore, we establish the following asymptotic estimates for high order eigenvalues
$$0<\liminf_{n\to\infty}\frac{\lambda_n}{n^{\alpha/d}}\le \limsup_{n\to\infty}\frac{\lambda_n}{n^{\alpha/d}}<\infty,$$ and obtain that there are constants $0<c_1\le c_2<\infty$ such that for all $t\in (0,1)$,
$$c_1t^{-d/\alpha}\le \sup_{x,y\in\R^d}\frac{ p_\mu(t,x,y)}{\phi^{(\beta/(2\alpha))\wedge1}(x)\phi^{(\beta/(2\alpha))\wedge1}(y)}\le c_2t^{-d/\alpha},$$ where $\phi$ is the eigenfunction (also called the ground state in the literature) corresponding to the first eigenvalue.

When $\alpha=2$, $L_0$ becomes a Laplacian operator with unbounded diffusion coefficient; that is,
 $L_0=(1+|x|)^\beta\Delta.$ In this setting, the solvability of the elliptic and the parabolic problems corresponding to the operator $L_0$ have been already investigated in \cite{MS12-2, MPW}. In particular, according to \cite[Proposition 2.2]{MS12-2}, the associated semigroup and resolvent are compact if and only if $\beta>2$. Later heat kernel (i.e., density function of the semigroup) estimates for the same operator have been obtained in \cite{MS12, MS13}. To the best of our knowledge, the corresponding results for $\alpha\in(0,2)$ are not available. The aim of this paper is to fill the gap. Due to the non-local property of fractional Laplacian operator, the approach of \cite{MS12-2, MPW} is not efficient in the present setting. To obtain sharp criteria for the compactness, we will make use of super Poincar\'e inequalities for non-local Dirichlet forms. On the other hand, similar to \cite{MS12, MS13}, upper bound estimates for heat kernels are derived by using weighted Nash inequalities. These along with recent developments on Dirichlet heat kernels for fractional Laplacian operators (see \cite{KK} and the references therein) in turn yield  asymptotic estimates for the associated high order eigenvalues.

We should mention that there is much interest towards fractional Laplacian operator with unbounded coefficients in the theory of stochastic processes. For $\alpha\in(0,2)$, let $X=(X_t)_{t\ge0}$ be a rotationally symmetric $\alpha$-stable process on $\R^d$. It is well known that
the infinitesimal generator of $X$ is the fractional Laplacian $-(-\Delta)^{\alpha/2}$. Next, we consider a time-change of $X$. Let $\mu(dx)=(1+|x|)^{-\beta}\,dx$ and $A_t=\int_0^t
(1+|X_s|)^{-\beta}\, ds$ for any $t\ge0$, which is a positive continuous additive functional of
$X$. For any $t>0$, define $\tau_t=\inf\{s>0:A_s>t\}$ and set $X^\mu_t=X_{\tau_t}$. It is known that $X^\mu=(X_t^\mu)_{t\ge0}$ is a $\mu$-symmetric Markov process on $\R^d$, see \cite[Theorem 5.2.2]{CF} or \cite[Theorem 6.2.1]{FOT}. We can easily verify that the infinitesimal generator of $X^\mu$ is just the operator $L_0$ given by \eqref{e:ger}. The ergodicity for one-dimensional symmetric Markov jump
processes associated with time-changed symmetric $\alpha$-stable processes has been considered in \cite{CW13} by using Poincar\'e type inequalities, and the H\"{o}lder continuity of semigroups was obtained in \cite{LW} by using the coupling method. As shown in \cite[Example 2.2]{KP}, the time-changed symmetric $\alpha$-stable process is a typical example in  study of the ergodicity for jump-diffusion processes.

According to results on (intrinsic) super Poincar\'e inequalities for the essential spectrum introduced in \cite{Wang01, Wang02} (see also \cite[Chapter 3]{WBook} and references therein), in order to make a non-compact semigroup compact one may enlarge the associated Dirichlet form. One way to enlarge the Dirichlet form is by adding a potential term, which leads to the study of Schr\"{o}dinger semigroups; the other way is to multiplying a weight to the energy (square field) as done in the present paper, and it corresponds to a time change as mentioned above. The former has been done very recently in \cite{JW} for non-local Schr\"{o}dinger operators. Theoretically, by using a ground state transform (see e.g.\ \cite[Chapter 10]{Gr}), these two ways can be transferred into each other. However, we should emphasize that these two ways may yield completely different analytic properties. For example, asymptotic estimates for high order eigenvalues of fractional Laplacian Schr\"{o}dinger operators will heavily depend on the potential term, see \cite[Example 2.2]{JW}, while those for weighted fractional  Laplacian operators can be independent of the weighted function, see Theorem \ref{T:main}(i) below.

The reminder of this paper is arranged as follows. In the next section, we are concerned about criteria for the compactness of semigroup and resolvent corresponding to the operator $L=-W(x)(-\Delta)^{\alpha/2}$ by applying the so called instric super Poincar\'e inequalities first introduced in \cite{Wang02} . General results for the compactness of the semigroup corresponding to time-changed Dirichlet forms  are given in terms of (local) super Poincar\'e inequalities plus Orlicz-Sobolev inequalities or Hardy type inequalities, which are interesting of their own. Section \ref{eigen} is devoted to asymptotic estimates for high order eigenvalues, which are based on explicit and sharp estimates for the associated heat kernels.
\section{Compactness of semigroups associated with time-changed Dirichlet forms}
\subsection{Super Poincar\'e inequalities for time-changed Dirichlet forms}
Let $(E,\rho)$ be a Polish space with Borel $\sigma$-algebra $\mathscr{F}$, and $\nu$ $\sigma$-finite measure on $E$. Let $(\mathscr{E}, \mathscr{D}(\mathscr{E}))$ be a symmetric Dirichlet form on $L^2(\nu)$ with generator $(L,\mathscr{D}(L))$. Let $\kappa$ be a strictly positive measurable function on $E$ such that it is bounded from below. Define $L_\kappa f=\kappa\cdot Lf$ and $\nu_\kappa(dx)=\kappa(x)^{-1}\,\nu(dx).$ Then,
the operator $L_\kappa$ is symmetric with respect to the measure $\nu_\kappa$. Denote by $\langle \cdot, \cdot\rangle_{L^2(\nu)}$ and $\langle \cdot, \cdot\rangle_{L^2(\nu_\kappa)}$ the inner product in $L^2(\nu)$ and $L^2(\nu_\kappa)$, respectively. The bilinear form corresponding to $-L_\kappa$ on $L^2(\nu_\kappa)$ is given by
\begin{align*}\mathscr{E}_\kappa(f,g):=&-\langle L_\kappa f, g\rangle_{L^2(\nu_\kappa)}=-\int_E g(x) L_\kappa f(x)\,\nu_\kappa(dx)\\
=&-\int_E g(x)\cdot \kappa(x)\cdot Lf(x)\cdot \kappa(x)^{-1}\,\nu(dx)=-\langle L f, g\rangle_{L^2(\nu)}=\mathscr{E}(f,g),\end{align*} where $f,g\in L^2(\nu_\kappa)\cap \mathscr{D}(L)=:\mathscr{D}(L_\kappa)$. That is, $\mathscr{E}_\kappa$ enjoys the same formula as $\mathscr{E}$ on $\mathscr{D}(L_\kappa)\times \mathscr{D}(L_\kappa)$. The reader can refer to \cite[Section 5.5.2]{CF} for more details on the form $\mathscr{E}_\kappa$.

In the following, we fix $o\in E$. Assume that the following local super Poincar\'e inequality holds for $(\mathscr{E}, \mathscr{D}(\mathscr{E}))$:
\begin{equation}\label{e:local} \nu (f^2\I_{B(o,r)})\le s \mathscr{E}(f,f)+\beta(s,r)\nu (|f|\I_{B(o,r)})^2,\quad s,r>0,  f\in \mathscr{D}(\mathscr{E}),\end{equation}
where $\beta(s,r)$ satisfies that $s\mapsto \beta(s,r)$ is decreasing for any fixed $r>0$, and $r\mapsto \beta(s,r)$ is increasing for any fixed $s>0$.
\begin{proposition}\label{P:sup} Suppose that \eqref{e:local} holds true.
\begin{itemize}
\item[(i)] Assume that there exist a Young function $N$ and a constant $c>0$ such that for all $f\in \mathscr{D}(\mathscr{E})$,
\begin{equation}\label{T:sup1}\|f^2\|_N \le c\mathscr{E}(f,f)\end{equation} and $\kappa^{-1}\in L_{\hat N}(\nu),$ where $\hat N (r)=\sup_{s\ge0}\{sr-N(s)\}$ for all $r\ge0$. Then, for all $f\in \mathscr{D}(\mathscr{E})\cap \mathscr{D}(L_\kappa)$ and $s,r>0$, \begin{equation}\label{T:sup1-1}\begin{split}\nu_\kappa(f^2)\le& \left[s\left(\sup_{B(o,r)}\kappa^{-1}\right)+c  \|\kappa^{-1}\I_{B(o,r)^c}\|_{\hat N}\right]\mathscr{E}(f,f)\\
&+ \beta(s,r)\left(\sup_{B(o,r)}\kappa^{-1}\right)\nu(|f|\I_{B(o,r)})^2.\end{split}\end{equation}
\item[(ii)] If there exist a non-negative measurable function $\kappa_0$ and a constant $c>0$ such that
\begin{equation}\label{T:sup2}\int_E f^2(x)\kappa_0(x)^{-1}\,\nu(dx)\le c \mathscr{E}(f,f), \quad f\in \mathscr{D}(\mathscr{E})\end{equation} and
$$\lim_{d(x,o)\to\infty}  \frac{\kappa_0(x)}{\kappa(x)}=0,$$ then for all $f\in \mathscr{D}(\mathscr{E})\cap \mathscr{D}(L_\kappa)$ and $s,r>0$, \begin{equation}\label{T:sup2-1}\begin{split}\nu_\kappa(f^2)\le& \left[s\left(\sup_{B(o,r)}\kappa^{-1}\right)+c\left(\sup_{x\in B(o,r)^c}\frac{\kappa_0(x)}{\kappa(x)} \right)\right]\mathscr{E}(f,f)\\
&+ \beta(s,r)\left(\sup_{B(o,r)}\kappa^{-1}\right) \nu(|f|\I_{B(o,r)})^2.\end{split}\end{equation}
\end{itemize}\end{proposition}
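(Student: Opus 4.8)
The plan is to prove both parts by the same localization device, splitting the integral defining $\nu_\kappa(f^2)$ over the ball $B(o,r)$ and its complement:
\[
\nu_\kappa(f^2)=\int_E f^2\,\kappa^{-1}\,d\nu=\int_{B(o,r)}f^2\,\kappa^{-1}\,d\nu+\int_{B(o,r)^c}f^2\,\kappa^{-1}\,d\nu .
\]
On the ball one bounds $\kappa^{-1}\le\sup_{B(o,r)}\kappa^{-1}$ pointwise, so that $\int_{B(o,r)}f^2\,\kappa^{-1}\,d\nu\le\bigl(\sup_{B(o,r)}\kappa^{-1}\bigr)\,\nu(f^2\I_{B(o,r)})$, and then applies the local super Poincar\'e inequality \eqref{e:local}. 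This produces exactly the term
\[
\Bigl(\sup_{B(o,r)}\kappa^{-1}\Bigr)\bigl[s\,\mathscr{E}(f,f)+\beta(s,r)\,\nu(|f|\I_{B(o,r)})^2\bigr],
\]
which already supplies the last summand and part of the first summand in both \eqref{T:sup1-1} and \eqref{T:sup2-1}.

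It remains to estimate the tail $\int_{B(o,r)^c}f^2\,\kappa^{-1}\,d\nu$, and here the two hypotheses enter. In case (i), I would use H\"older's inequality in Orlicz spaces, $\int_E uv\,d\nu\le\|u\|_N\,\|v\|_{\hat N}$, with $u=f^2$ and $v=\kappa^{-1}\I_{B(o,r)^c}$; since $\kappa^{-1}\in L_{\hat N}(\nu)$ the restriction $\kappa^{-1}\I_{B(o,r)^c}$ lies in $L_{\hat N}(\nu)$ with norm at most $\|\kappa^{-1}\I_{B(o,r)^c}\|_{\hat N}$, and then the Orlicz--Sobolev inequality \eqref{T:sup1} replaces $\|f^2\|_N$ by $c\,\mathscr{E}(f,f)$; adding the ball estimate gives \eqref{T:sup1-1}. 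In case (ii), I would factor $f^2\kappa^{-1}=(f^2\kappa_0^{-1})\,(\kappa_0/\kappa)$ on $B(o,r)^c$, bound $\kappa_0/\kappa\le\sup_{B(o,r)^c}(\kappa_0/\kappa)$, enlarge the remaining integral from $B(o,r)^c$ to all of $E$, and apply the Hardy-type inequality \eqref{T:sup2}; this contributes $c\bigl(\sup_{B(o,r)^c}\kappa_0/\kappa\bigr)\mathscr{E}(f,f)$, and summing with the ball estimate gives \eqref{T:sup2-1}.

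A few routine points need care but present no real obstacle. One should fix the normalization in the Orlicz--H\"older inequality consistently (Luxemburg norm $\|\cdot\|_N$ paired with the Orlicz/Amemiya norm $\|\cdot\|_{\hat N}$ of the complementary Young function) so that no stray constant appears. In case (ii), on $\{\kappa_0=0\}$ the factorization should be read with the convention $\kappa_0/\kappa=0$; there \eqref{T:sup2} forces $f=0$ $\nu$-a.e., so that set contributes nothing to the tail. Since $f\in\mathscr{D}(\mathscr{E})$, the right-hand sides of \eqref{T:sup1} and \eqref{T:sup2}, hence all the integrals above, are finite, so the manipulations are legitimate; recall also that $\mathscr{E}_\kappa=\mathscr{E}$ on $\mathscr{D}(L_\kappa)$, so the energy on the right is genuinely $\mathscr{E}(f,f)$. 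The only ``content'' of the argument is the clean separation of local behaviour (governed by \eqref{e:local}, where $\kappa^{-1}$ is bounded) from behaviour at infinity (governed by $\hat N$-integrability of $\kappa^{-1}$ in (i), or by the decay of $\kappa_0/\kappa$ in (ii)); I do not foresee any genuine difficulty.
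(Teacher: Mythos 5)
Your argument is correct and is essentially identical to the paper's own proof: the same split of $\nu_\kappa(f^2)$ over $B(o,r)$ and its complement, the local super Poincar\'e inequality on the ball, the generalized Orlicz--H\"older inequality combined with \eqref{T:sup1} for part (i), and the factorization $f^2\kappa^{-1}=(f^2\kappa_0^{-1})(\kappa_0/\kappa)$ combined with \eqref{T:sup2} for part (ii). No further comment is needed.
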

Recall that a function $N:[0,\infty)\to [0,\infty)$ is called a Young function, if $N(0)=0$ and $N$ is increasing and continuous. The Orlicz norm induced by the Young function $N$ is given by
$$\|f\|_{N}:=\inf\bigg\{r>0: \int_{E} N\left(\frac{|f(x)|} r\right)\, \nu(dx) \le 1\bigg\},$$ where $\inf \emptyset =\infty$ by convention. Denote by $L_N(\nu)=\{f:\|f\|_N<\infty\}$. In the literature, \eqref{T:sup1} is called the Orlicz-Sobolev inequality, e.g.,\ see \cite{Wang05}.
\eqref{T:sup2} comes from the so-called Hardy type inequality, and the reader can refer to \cite[Theorem 2]{BDK} in general setting.

\begin{proof}[Proof of Proposition $\ref{P:sup}$] For any $f\in \mathscr{D}(\mathscr{E})\cap \mathscr{D}(L_\kappa)$ and $r>0$,
$$\nu_\kappa(f^2)=\nu(f^2\kappa^{-1})=\nu(f^2\kappa^{-1}\I_{B(o,r)})+\nu(f^2\kappa^{-1}\I_{B(o,r)^c}).$$ By \eqref{e:local}, it holds that for all $s>0$,
\begin{equation}\label{e:f1}\begin{split}\nu(f^2\kappa^{-1}\I_{B(o,r)})
&\le \left(\sup_{B(o,r)}\kappa^{-1}\right) \nu(f^2\I_{B(o,r)})\\
&\le\left(\sup_{B(o,r)}\kappa^{-1}\right)\left[ s \mathscr{E}(f,f)+\beta(s,r)\nu(|f|\I_{B(o,r)})^2\right] \\
&= s\left(\sup_{B(o,r)}\kappa^{-1}\right)\mathscr{E}(f,f)+ \beta(s,r)\left(\sup_{B(o,r)}\kappa^{-1}\right)\nu(|f|\I_{B(o,r)})^2.\end{split}\end{equation}

(1) If \eqref{T:sup1} holds, then, by the generalized H\"{o}lder inequality (see \cite[Chapter III, Proposition 3.1]{RR}),
$$\nu(f^2\kappa^{-1}\I_{B(o,r)^c})\le \|f^2\|_{N}\|\kappa^{-1}\I_{B(o,r)^c}\|_{\hat N}\le c \mathscr{E}(f,f) \|\kappa^{-1}\I_{B(o,r)^c}\|_{\hat N}.$$ Combining this with \eqref{e:f1}, we obtain \eqref{T:sup1-1}.

(2) If \eqref{T:sup2} is satisfied, then
\begin{align*} \nu(f^2\kappa^{-1}\I_{B(o,r)^c})=&\int_{B(o,r)^c}\frac{f^2(x)}{\kappa_0(x)}\frac{\kappa_0(x)}{\kappa(x)}\,\nu(dx)\\
\le&\left(\sup_{x\in B(o,r)^c}\frac{\kappa_0(x)}{\kappa(x)} \right)\int_{B(o,r)^c}{f^2(x)}{\kappa_0(x)^{-1}}\,\nu(dx)\\
\le&c\left(\sup_{x\in B(o,r)^c}\frac{\kappa_0(x)}{\kappa(x)} \right)\mathscr{E}(f,f).\end{align*} This along with  \eqref{e:f1} yields \eqref{T:sup2-1}.
\end{proof}
\subsection{Criteria for the compactness of semigroup  generated by weighted fractional Laplacian operators}
In this subsection, we consider the operator $L=-W(x)(-\Delta)^{\alpha/2}$, where $W(x)\ge 1$ for all $x\in \R^d$. Recall that the Dirichlet form $(D,\mathscr{D}(D))$ of the symmetric $\alpha$-stable process $X=(X_t)_{t\ge0}$ on $L^2(\R^d;dx)$ is given by
\begin{align*}D(f,g)=&\frac{1}{2}\int_{\R^d\times \R^d}(f(x)-f(y))(g(x)-g(y))\frac{c_{d,\alpha}}{|x-y|^{d+\alpha}}\,dx\,dy,\\
\mathscr{D}(D)=&\big\{f\in L^2(\R^d;dx): D(f,f)<\infty\}.\end{align*} We know from \cite[Section 2.2.2]{CF} that $(D,\mathscr{D}(D))$ is a regular Dirichlet form, and $C_c^\infty(\R^d)$ is a core for $(D,\mathscr{D}(D))$. Let $\mathscr{D}_e(D)$ be the extended Dirichlet space of $(D,\mathscr{D}(D))$ for the process $X$. When $d>\alpha$, the process $X$ is transient, and by \cite[(2.2.18)]{CF}
$$\mathscr{D}_e(D)=\big\{ f\in L^1_{loc}(\R^d)\cap \mathscr{S}': D(f,f)<\infty \big\},$$  where $\mathscr{S}'$ denotes the space of tempered distributions. When $d\le \alpha$, the process $X$ is recurrent, and it further follows from \cite[(6.5.4)]{CF} that
$$\mathscr{D}_e(D)=\big\{ f: \text{Borel measurable with } |f|<\infty \text{ a.e. and } D(f,f)<\infty \big\}.$$
Let $\mu(dx)=W(x)^{-1}\,dx$ and $A_t=\int_0^t
W(X_s)^{-1}\, ds$ for any $t\ge0$, which is a positive continuous additive functional of
$X$. For any $t>0$, define $\tau_t=\inf\{s>0:A_s>t\}$ and set $X^\mu_t=X_{\tau_t}$. It follows from \cite[Corollary 3.3.6, Theorem 5.2.2 and Theorem 5.2.8]{CF} that  $X^\mu=(X_t^\mu)_{t\ge0}$ is a $\mu$-symmetric Markov process on $\R^d$, whose associated Dirichlet form $(D, \mathscr{D}(D^\mu))$ is regular on $L^2(\R^d;\mu)$ having
$C_c^\infty(\R^d)$ as its core. In view of \cite[Corollary 5.2.12]{CF}, $\mathscr{D}(D^\mu)= \mathscr{D}_e(D)\cap L^2(\R^d;\mu).$ By abusing the notation  a little bit, we also denote by $L^2$-generator of $X$ by $-(-\Delta)^{\alpha/2}$. Then, the $L^2$-generator of $X^\mu$ is the operator $L$.

Next, we set $\nu$ to be the Lebesgue measure, $\kappa(x)=W(x)$ and $\mu(dx)=\nu_\kappa(dx)=\kappa(x)^{-1}\,dx= W(x)^{-1}\,dx$ in notations of the previous subsection.
\begin{theorem}\label{T:com1}Assume that $d>\alpha$. Then, we have the following two statements.
\begin{itemize}
\item[(i)]Let $\Psi_1(r)=\sup_{|x|\ge r}\frac{|x|^\alpha}{W(x)}$, $\Psi_2(r)=\sup_{|x|\le r}W(x)^2$ and $$\beta_0(r)=c_1(1+r^{-d/\alpha})\Psi_2(\Psi_1^{-1}(r\wedge c_2))$$ for any $r,c_1,c_2>0$, where $\Psi_1^{-1}(r):=\inf\{t>0:\Psi_1(t)\le r\}.$ If $\lim\limits_{r\to\infty}\Psi_1(r)=0$ and  $$\int_s^\infty\frac{ \beta_0^{-1}(r)}{r}\,dr<\infty$$ for large $s>0$, then, the operator $L$ generates  a compact semigroup or resolvent on $L^2(\R^d;\mu)$, where $\beta_0^{-1}(r):=\inf\{t>0:\beta(t)\le r\}.$
\item[(ii)] If $$\sup_{x\in \R^d}\frac{W(x)}{(1+|x|)^\alpha}<\infty,$$ then the semigroup or resolvent generated by $L$ is not compact.
\end{itemize} \end{theorem}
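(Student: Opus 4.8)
To prove (i), the plan is to convert two well-known inequalities for the stable Dirichlet form $(D,\mathscr{D}(D))$ into a super Poincar\'e inequality for the time-changed form via Proposition~\ref{P:sup}(ii), and then to invoke the standard criterion that such an inequality whose rate function $\beta_0$ obeys $\int_s^\infty\beta_0^{-1}(r)/r\,dr<\infty$ forces compactness. Both inequalities are at hand because $d>\alpha$: the fractional Hardy inequality
$$\int_{\R^d}\frac{f(x)^2}{|x|^\alpha}\,dx\le C_H\, D(f,f),\qquad f\in\mathscr{D}(D),$$
which is exactly \eqref{T:sup2} with $\nu$ Lebesgue measure, $\kappa_0(x)=|x|^\alpha$ and $c=C_H$ (the assumption $\lim_{r\to\infty}\Psi_1(r)=0$ says precisely that $\kappa_0(x)/\kappa(x)=|x|^\alpha/W(x)\to0$ as $|x|\to\infty$); and the local super Poincar\'e inequality \eqref{e:local} with $\beta(s,r)=c(1+s^{-d/\alpha})$, obtained by interpolating the Sobolev embedding $\mathscr{D}_e(D)\hookrightarrow L^{2d/(d-\alpha)}(\R^d;dx)$ against $L^1(\I_{B(o,r)}\,dx)$ --- equivalently, from the Nash inequality for $(-\Delta)^{\alpha/2}$.

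Applying Proposition~\ref{P:sup}(ii) with $o$ the origin, and using $W\ge1$ (so $\sup_{B(o,r)}\kappa^{-1}\le1$), $\sup_{x\in B(o,r)^c}\kappa_0(x)/\kappa(x)=\Psi_1(r)$, and $\nu(|f|\I_{B(o,r)})\le(\sup_{B(o,r)}W)\,\nu_\kappa(|f|)=\Psi_2(r)^{1/2}\nu_\kappa(|f|)$, inequality \eqref{T:sup2-1} becomes
$$\nu_\kappa(f^2)\le\big(s+C_H\Psi_1(r)\big)\mathscr{E}_\kappa(f,f)+c\,(1+s^{-d/\alpha})\,\Psi_2(r)\,\nu_\kappa(|f|)^2,\qquad s,r>0,$$
for all $f\in\mathscr{D}(\mathscr{E})\cap\mathscr{D}(L_\kappa)$, and hence (by approximation) for all $f\in\mathscr{D}(\mathscr{E}_\kappa)$. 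For a prescribed $t>0$ I would take $s=t/2$ and $r=\Psi_1^{-1}\big((t/(2C_H))\wedge c_2\big)$, which is finite since $\Psi_1(r)\to0$ and makes $s+C_H\Psi_1(r)\le t$; after renaming constants this is exactly $\nu_\kappa(f^2)\le t\,\mathscr{E}_\kappa(f,f)+\beta_0(t)\,\nu_\kappa(|f|)^2$ with $\beta_0$ of the stated form. Since $\mathscr{E}_\kappa$ is the Dirichlet form of $L=L_\kappa$ on $L^2(\R^d;\mu)=L^2(\nu_\kappa)$, the hypothesis $\int_s^\infty\beta_0^{-1}(r)/r\,dr<\infty$ combined with the compactness criterion of \cite{Wang02} (see also \cite[Chapter~3]{WBook}) yields that $L$ generates a compact semigroup, equivalently resolvent. (Alternatively one could run this through Proposition~\ref{P:sup}(i) with the Young function $N(s)=s^{d/(d-\alpha)}$, whose conjugate satisfies $\hat N(r)\asymp r^{d/\alpha}$, so that $\kappa^{-1}\in L_{\hat N}(\nu)$ amounts to $W^{-1}\in L^{d/\alpha}(\R^d;dx)$; I prefer the Hardy route as it reproduces $\Psi_1$ verbatim.)

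For (ii), I would show directly that the inclusion $\mathscr{D}(D^\mu)\hookrightarrow L^2(\R^d;\mu)$ is not compact, which excludes compactness of the resolvent and hence of the semigroup. Fix a unit vector $e_1$ and $0\ne g\in C_c^\infty(\R^d)$ with $\operatorname{supp}g\subset B(0,1/2)$, and for $n\ge0$ put $x_n=4^ne_1$, $r_n=4^n$ and $f_n(x)=r_n^{-d/2}g\big((x-x_n)/r_n\big)$. By the scaling and translation invariance of $(-\Delta)^{\alpha/2}$, $D(f_n,f_n)=r_n^{-\alpha}D(g,g)$; and since $\tfrac12|x_n|\le|x_n+r_nu|\le\tfrac32|x_n|$ for $u\in\operatorname{supp}g$, one gets $1\le W(x_n+r_nu)\le C(1+\tfrac32|x_n|)^\alpha\le C'|x_n|^\alpha$, so that
$$\|f_n\|_{L^2(\R^d;\mu)}^2=\int_{\R^d}g(u)^2\,W(x_n+r_nu)^{-1}\,du\ge (C')^{-1}|x_n|^{-\alpha}\|g\|_{L^2}^2.$$
With $\tilde f_n=f_n/\|f_n\|_{L^2(\R^d;\mu)}$ we then have $\|\tilde f_n\|_{L^2(\R^d;\mu)}=1$ and, since $r_n^{-\alpha}=|x_n|^{-\alpha}$,
$$\mathscr{E}_\kappa(\tilde f_n,\tilde f_n)=\frac{r_n^{-\alpha}D(g,g)}{\|f_n\|_{L^2(\R^d;\mu)}^2}\le\frac{C'D(g,g)}{\|g\|_{L^2}^2},$$
uniformly in $n$; this is precisely where $W(x)\le C(1+|x|)^\alpha$ is used, as the energy decay $r_n^{-\alpha}$ must be balanced by the $L^2(\mu)$-mass decay $|x_n|^{-\alpha}$, which forces $r_n\asymp|x_n|$. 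Because the supports $B(4^ne_1,4^n/2)$ are pairwise disjoint, $\{\tilde f_n\}$ is orthonormal in $L^2(\R^d;\mu)$: it tends weakly to $0$ while $\|\tilde f_n-\tilde f_m\|_{L^2(\R^d;\mu)}^2=2$ for $n\ne m$. Since $\{\tilde f_n\}$ is bounded in $\mathscr{D}(\mathscr{E}_\kappa)$ and has no $L^2(\R^d;\mu)$-convergent subsequence, it witnesses the non-compactness of $(1-L)^{-1}$, hence of $(P_t)_{t\ge0}$.

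The main obstacle lies in part (i): one must pin down the correct compactness criterion for a super Poincar\'e inequality over the possibly infinite measure $\mu=\nu_\kappa$. The bare Nash-type super Poincar\'e inequality for $(-\Delta)^{\alpha/2}$ with respect to Lebesgue measure does \emph{not} by itself give compactness, and it is exactly the decay $\Psi_1(r)\to0$ together with the integrability of $\beta_0^{-1}(r)/r$ that rules out this degenerate situation; one also needs to check that \eqref{e:local} for the non-local form $(D,\mathscr{D}(D))$ genuinely holds with the clean rate $\beta(s,r)=c(1+s^{-d/\alpha})$. Once the scale $r_n\asymp|x_n|$ has been identified, part (ii) is routine.
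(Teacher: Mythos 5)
Your part (i) reproduces the paper's derivation of the super Poincar\'e inequality \eqref{e:sup2} faithfully (local Nash inequality for the fractional Laplacian, fractional Hardy inequality, Proposition \ref{P:sup}(ii), the bound $\nu(|f|\I_{B(0,r)})\le\Psi_2(r)^{1/2}\mu(|f|)$, and the choice $r=\Psi_1^{-1}(s\wedge c_2)$), but the concluding step has a genuine gap. An inequality of the form $\mu(f^2)\le s\,D(f,f)+\beta_0(s)\mu(|f|)^2$ with $\int^\infty\beta_0^{-1}(r)r^{-1}\,dr<\infty$ is \emph{not} a compactness criterion when $\mu$ may be infinite --- and here $\mu(dx)=W(x)^{-1}dx$ need not be finite (e.g.\ $W(x)=(1+|x|)^\beta$ with $\alpha<\beta<d$). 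Such an inequality yields only ultracontractivity: the Nash inequality for $(-\Delta)^{\alpha/2}$ on $L^2(\R^d;dx)$ itself holds with $\beta(s)=c(1+s^{-d/\alpha})$, which satisfies your integrability condition, yet the free stable semigroup has spectrum $[0,\infty)$ and is certainly not compact. You correctly flag this as ``the main obstacle'' and assert that $\Psi_1(r)\to0$ together with the integrability ``rules out this degenerate situation,'' but you never turn that decay into an inequality to which a compactness criterion applies. The paper's missing step is exactly this: redo your computation bounding $\nu(|f|\I_{B(0,r)})\le\Psi_2(r)^{1/2}e^{r}\,\mu(|f|\psi)$ with the weight $\psi(x)=e^{-|x|}\in L^2(\R^d;\mu)$, then choose the same $r=\Psi_1^{-1}(s\wedge c)$ (finite because $\Psi_1(r)\to0$) to obtain the \emph{intrinsic} super Poincar\'e inequality \eqref{e:sup3} with rate $\beta_1(s)=c_4\beta_0(s)\exp\bigl(2\Psi_1^{-1}(s\wedge c_5)\bigr)<\infty$; it is \eqref{e:sup2} and \eqref{e:sup3} together, via \cite[Theorems 0.3.9, 3.1.7 and 3.2.2]{WBook}, that give compactness of the semigroup and resolvent. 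Given the estimates you already have, this extra step costs one line, but without it part (i) does not close.

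Your part (ii) is correct and takes a genuinely different route from the paper. You exhibit an explicit sequence of rescaled translated bumps with $r_n\asymp|x_n|=4^n$, disjoint supports, unit $L^2(\mu)$-norm and uniformly bounded Dirichlet energy (here $W(x)\le C(1+|x|)^\alpha$ enters, exactly as you say, to balance the energy decay $r_n^{-\alpha}$ against the $\mu$-mass decay $|x_n|^{-\alpha}$), so the form domain does not embed compactly into $L^2(\R^d;\mu)$ and neither the resolvent nor the semigroup can be compact. The paper instead argues by contradiction through the necessity direction of the intrinsic super Poincar\'e criterion \eqref{e:ins1}, testing with cutoffs $f_l$ and the weight $e^{-|x|}$, which is where $d>\alpha$ is used there; your construction is more elementary and self-contained (and does not even need $d>\alpha$), while the paper's fits uniformly into the functional-inequality framework it uses for part (i).
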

\begin{proof}
(1) We assume that assumptions in the statement (i) hold. The local Nash inequality for the fractional Laplacian gives us
that there exists a constant $c_0>0$ such that for all $r,s>0$ and $f\in C_c^\infty(\R^d)$,
$$\int_{\{|x|\le r\}} f^2(x)\,dx\le s D(f,f)+c_0(1+s^{-d/\alpha})\left(\int_{\{|x|\le r\}}|f|(x)\,dx\right)^2.$$ See \cite[(2.16) in the proof of Lemma 2.1]{CW14}. Thus, the local super Poincar\'e inequality \eqref{e:local} holds with $\beta(s,r)=c_0(1+s^{-d/\alpha})$ for all $s,r>0$, which is independent of $r$.
On the other hand, the Hardy inequality for the
fractional Laplacian on $\R^d$ with $d>\alpha$ (for example, see \cite[Theorem 1.1]{D04} or \cite[Proposition 5]{BDK}) says that \eqref{T:sup2} holds with $\kappa_0(x)=|x|^\alpha$, which along with \eqref{T:sup2-1} and the definition of $\Psi_1(r)$ in turn yields that for all $s>0$, $r>0$ and $f\in C_c^\infty(\R^d)$,
\begin{equation}\label{e:sup1}\mu(f^2)\le (s+c_1\Psi_1(r))D(f,f)+ c_0(1+s^{-d/\alpha})\nu(|f|\I_{B(0,r)})^2.\end{equation}

By \eqref{e:sup1} and the definition of $\Psi_2(r)$,
$$\mu(f^2)\le (s+c_1\Psi_1(r))D(f,f)+ c_0(1+s^{-d/\alpha})\Psi_2(r)\mu(|f|)^2.$$ Then,
taking $r=\Psi_1^{-1}(s\wedge c_2)$ for all $s>0$ and some $c_2>0$ in the inequality above (since $\lim_{r\to \infty}\Psi_1(r)=0$, one may choose $c_2$ small enough such that $0< r=\Psi_1^{-1}(s\wedge c_2)<\infty$), we can obtain
that for all $s>0$ and $f\in C_c^\infty(\R^d)$,
\begin{equation}\label{e:sup2}\mu(f^2)\le s
D(f,f)+\beta_0(s)\mu(|f|)^2,\end{equation} where $\beta_0(s)$ (possibly with different constants $c_1$ and $c_2$) is defined in the statement (i). Also due to the assumption that $\lim_{r\to \infty}\Psi_1(r)=0$, $\beta_0(s)<\infty$ for $s>0$ small enough. Again, by the assumption in (i),
$$\int_s^\infty\frac{ \beta_0^{-1}(r)}{r}\,dr<\infty$$ for large $s>0$, then $P_t=e^{tL}$ is ultracontractive, i.e.,\ $\|P_t\|_{L^1(\R^d;\mu)\to L^\infty(\R^d;\mu)}<\infty$ for all $t>0$, so that
the semigroup $P_t$ has a density $p^\mu(t,x,y)$ with respect to $\mu$ for all $t>0$, see \cite[Theorem 3.3.14]{WBook}. Moreover, by \cite[Theorem 3.1]{BBCK}, there is an $(D,\mathscr{D}(D^\mu))$-nest $(F_k)_{k\ge1}$ of compact sets (see \cite[p.\ 69]{FOT} for the definition of nest set) so that $\cup_{k=1}^\infty F_k=\R^d$
and that for every $t>0$ and $y\in \R^d$, $x\mapsto p_\mu(t,x,y)$ is continuous on each $F_k$.

Following the approach of \eqref{e:sup2}, we can also obtain that for all $s>0$ and $f\in C_c^\infty(\R^d)$,
\begin{equation}\label{e:sup3}\mu(f^2)\le s
D(f,f)+\beta_1(s)\mu(|f|\psi)^2,\end{equation} where $\psi(x)=e^{-|x|}\in L^2(\R^d;\mu)$ and $$\beta_1(s)=c_4\beta_0(s)\exp(2\Psi_1^{-1}(s\wedge c_5))<\infty,$$ also thanks to the assumption that $\lim_{r\to0}\Psi_1(r)=0$. Then, combining \eqref{e:sup2} and \eqref{e:sup3} with \cite[Theorem
3.1.7, Theorem 3.2.2 and Theorem 0.3.9]{WBook}, we know that the operator $L$ generates a compact semigroup or resolvent on $L^2(\R^d;\mu)$.

We also mention that the fractional Sobolev inequality on $\R^d$ with $d>\alpha$ yields that \eqref{T:sup1} holds
with $N(r)=r^{d/(d-\alpha)}$. Using this and Proposition \ref{P:sup}(i), one can obtain the desired assertion by following the argument above. The details are left to readers.

(2) We now assume that the assumption in the statement (ii) is satisfied.  Suppose that the operator $L=-W(x)(-\Delta)^{\alpha/2}$ can generate a compact semigroup on $L^2(\R^d;\mu)$. Then, according to \cite[Theorem
3.1.7, Theorem 3.2.2 and Theorem 0.3.9]{WBook} again, for any non-negative measurable function $\psi\ge0$ such that
$\mu(\psi^2)<\infty$, there exists $\beta:(0,\infty)\to(0,\infty)$
such that for all $f\in \mathscr{D}(D^\mu)$ and $r>0$,
\begin{equation}\label{e:ins1}\mu(f^2)\le r D(f,f)+\beta(r)\mu(|f|\psi)^2.\end{equation} In the following, we take $\psi(x)=e^{-|x|}$, and
$f_l\in C_c^\infty(\R^d)$ for all $l\ge1$ such that
\begin{equation*}
f_l(x)
\begin{cases}
=1, &\text{if}\ |x|\le l,\\
\in[0,1], &\text{if}\ l\le |x|\le 2l,\\
=0,&\text{if}\ |x|\ge 2l.
\end{cases}
\end{equation*}
We see from the assumption in (ii) that
$$\mu(f_l^2)\ge  c_0\int_{\{|x|\le l\}}\frac{1}{(1+|x|)^\alpha}\,dx\ge c_1 l^{d-\alpha}.$$ On the other hand,
\begin{align*}D(f_l,f_l)&\le c_2\int_{\{|x|\le 2l\}}\int \frac{(f_l(x)-f_l(y))^2}{|x-y|^{d+\alpha}}\,dy\,dx\\
&\le \frac{c_3}{l^2}\int_{\{|x|\le 2l\}}\int_{\{|y|\le 4l\}} \frac{|x-y|^2}{|x-y|^{d+\alpha}}\,dy\,dx\\
&\quad+ c_3\int_{\{|x|\le 2l\}}\int_{\{|y|\ge 4l\}}\frac{1}{|x-y|^{d+\alpha}} \,dy\,dx\\
&\le \frac{c_4}{l^2}\int_{\{|x|\le 2l\}}\int_{\{|x-y|\le 6l\}} \frac{|x-y|^2}{|x-y|^{d+\alpha}}\,dy\,dx\\
&\quad+ c_4\int_{\{|x|\le 2l\}}\int_{\{|x-y|\ge 2l\}}\frac{1}{|x-y|^{d+\alpha}} \,dy\,dx\\
&\le c_5 l^{d-\alpha},\end{align*} and
$$\mu(f_l\psi)^2\le \left(\int_{\R^d}\frac{1}{W(x)} e^{-|x|}\,dx\right)^2\le \left(\int_{\R^d}e^{-|x|}\,dx\right)^2\le c_6.$$
Applying all the estimates into \eqref{e:ins1}, we find that for all $r>0$ and $l\ge 1$,
$$1\le c_7 r +c_7\beta(r)l^{\alpha-d}.$$
Letting $l\to\infty$, we find that $1\le c_7r$ for all $r>0$, thanks to the fact that $d>\alpha$. This is a contradiction. Therefore, under the assumption of the statement (ii) the operator $L=-W(x)(-\Delta)^{\alpha/2}$ can not generate a compact semigroup on $L^2(\R^d;\mu)$ The proof is complete.
\end{proof}

As a direct consequence of Theorem \ref{T:com1}, we immediately have the following

\begin{corollary}\label{T:com}Assume the $d>\alpha$. Then, the operator $L_0=-(1+|x|)^\beta(-\Delta)^{\alpha/2}$ with $\beta\ge0$ can generates a compact semigroup on $L^2(\R^d;(1+|x|)^{-\beta}\,dx)$ if and only if $\beta>\alpha$. \end{corollary}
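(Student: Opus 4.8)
The plan is to deduce both implications directly from Theorem~\ref{T:com1}, applied with $W(x)=(1+|x|)^\beta$ (which indeed satisfies $W\ge1$); the only real work is then elementary one-variable bookkeeping for the auxiliary functions $\Psi_1$, $\Psi_2$ and $\beta_0$ appearing in that statement.

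\emph{Sufficiency ($\beta>\alpha$).} Here I would verify the hypotheses of Theorem~\ref{T:com1}(i). With $W(x)=(1+|x|)^\beta$ we have $\Psi_1(r)=\sup_{|x|\ge r}|x|^\alpha(1+|x|)^{-\beta}$; since $\beta>\alpha$, the scalar function $t\mapsto t^\alpha(1+t)^{-\beta}$ is bounded on $[0,\infty)$ and eventually strictly decreasing, so $\Psi_1$ is finite, $\lim_{r\to\infty}\Psi_1(r)=0$, and there exist constants $0<c\le C<\infty$ with $c\,r^{\alpha-\beta}\le\Psi_1(r)\le C\,r^{\alpha-\beta}$ for $r\ge1$; hence $\Psi_1^{-1}(r)$ is comparable to $r^{-1/(\beta-\alpha)}$ as $r\downarrow0$. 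Likewise $\Psi_2(r)=(1+r)^{2\beta}$ is comparable to $r^{2\beta}$ for $r\ge1$. Substituting these into $\beta_0(r)=c_1(1+r^{-d/\alpha})\Psi_2(\Psi_1^{-1}(r\wedge c_2))$ shows that, for all sufficiently small $r>0$, $\beta_0(r)$ is comparable to $r^{-\gamma}$ with $\gamma:=\frac{d}{\alpha}+\frac{2\beta}{\beta-\alpha}>0$. Consequently $\beta_0^{-1}(r)$ is comparable to $r^{-1/\gamma}$ for large $r$, and since $\int_s^\infty r^{-1-1/\gamma}\,dr<\infty$ we conclude that $\int_s^\infty\frac{\beta_0^{-1}(r)}{r}\,dr<\infty$ for $s$ large. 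Theorem~\ref{T:com1}(i) then gives that $L_0$ generates a compact semigroup (and resolvent) on $L^2(\R^d;(1+|x|)^{-\beta}\,dx)$.

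\emph{Necessity ($0\le\beta\le\alpha$).} In this range $W(x)(1+|x|)^{-\alpha}=(1+|x|)^{\beta-\alpha}\le1$ for every $x\in\R^d$, so $\sup_{x\in\R^d}W(x)(1+|x|)^{-\alpha}\le1<\infty$, and Theorem~\ref{T:com1}(ii) shows that the semigroup --- hence also the resolvent --- generated by $L_0$ is not compact. Combining the two cases proves the corollary.

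The only point demanding care --- the ``main obstacle,'' such as it is --- is making the two-sided bounds on $\Psi_1$ and $\Psi_1^{-1}$ rigorous across the transition between the regions $\{|x|\le1\}$ and $\{|x|\ge1\}$, and verifying that the finitely many constants $c_1,c_2$ hidden in $\beta_0$ do not interfere with the integral test. Both reduce to routine estimates, and the qualitative facts that actually drive the argument --- that $\Psi_1(r)\to0$ as $r\to\infty$ and that $\beta_0$ blows up only polynomially as $r\downarrow0$ --- are insensitive to those constants.
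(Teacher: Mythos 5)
Your proposal is correct and follows exactly the route the paper intends: the paper states this corollary as a direct consequence of Theorem \ref{T:com1} with $W(x)=(1+|x|)^\beta$, leaving precisely the bookkeeping for $\Psi_1$, $\Psi_2$, $\beta_0$ (sufficiency via part (i)) and the trivial bound $\sup_x W(x)(1+|x|)^{-\alpha}\le 1$ (necessity via part (ii)) that you carry out. Your asymptotics $\Psi_1(r)\asymp r^{\alpha-\beta}$, $\beta_0(r)\asymp r^{-\gamma}$ with $\gamma=\frac{d}{\alpha}+\frac{2\beta}{\beta-\alpha}$, and the resulting convergence of $\int_s^\infty\beta_0^{-1}(r)r^{-1}\,dr$ are all accurate.
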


The corollary below further illustrate the power of Theorem \ref{T:com1}.
\begin{corollary}\label{E:1} Assume that $d>\alpha$ and the weighted function $W(x)$ satisfies that
\begin{equation}\label{E:11}c_1(1+|x|)^\beta\le W(x)\le c_2\exp[c_3(1+|x|)^{(\beta-\alpha)/\delta}],\quad x\in \R^d\end{equation}for some constants $\beta>\alpha$, $\delta>1$ and $c_1,c_2,c_3>0$. Then, the operator $L=-W(x)(-\Delta)^{\alpha/2}$ can generates a compact semigroup on $L^2(\R^d;\mu)$.  \end{corollary}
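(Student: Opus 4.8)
The plan is to deduce Corollary \ref{E:1} from Theorem \ref{T:com1}(i) by verifying that a weight $W$ satisfying \eqref{E:11} fulfils the two hypotheses of that statement, namely $\lim_{r\to\infty}\Psi_1(r)=0$ and the Dini-type integrability condition $\int_s^\infty \beta_0^{-1}(r)/r\,dr<\infty$ for large $s$. First I would observe that from the lower bound $W(x)\ge c_1(1+|x|)^\beta$ with $\beta>\alpha$ we get
\begin{equation*}
\Psi_1(r)=\sup_{|x|\ge r}\frac{|x|^\alpha}{W(x)}\le \sup_{|x|\ge r}\frac{|x|^\alpha}{c_1(1+|x|)^\beta}\le \frac{c}{(1+r)^{\beta-\alpha}}\longrightarrow 0,
\end{equation*}
so the first hypothesis holds, and moreover $\Psi_1^{-1}(u)\le C u^{-1/(\beta-\alpha)}$ for small $u>0$ (up to constants). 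Next I would use the upper bound in \eqref{E:11} to control $\Psi_2$: since $W$ is, up to constants, at most $\exp[c_3(1+|x|)^{(\beta-\alpha)/\delta}]$ on $\{|x|\le r\}$, we have $\Psi_2(r)=\sup_{|x|\le r}W(x)^2\le c\exp[2c_3(1+r)^{(\beta-\alpha)/\delta}]$.

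The core computation is then to estimate $\beta_0(r)=c_1(1+r^{-d/\alpha})\Psi_2(\Psi_1^{-1}(r\wedge c_2))$ for small $r$ and to show its generalized inverse is Dini-integrable at infinity. Plugging in the two bounds above, for small $r$,
\begin{equation*}
\beta_0(r)\le c\,r^{-d/\alpha}\exp\!\big[2c_3\big(1+\Psi_1^{-1}(r)\big)^{(\beta-\alpha)/\delta}\big]\le c\,r^{-d/\alpha}\exp\!\big[C r^{-1/\delta}\big],
\end{equation*}
because $(\Psi_1^{-1}(r))^{(\beta-\alpha)/\delta}\le (Cr^{-1/(\beta-\alpha)})^{(\beta-\alpha)/\delta}=C' r^{-1/\delta}$ — this is exactly where the exponent $(\beta-\alpha)/\delta$ in \eqref{E:11} and the restriction $\delta>1$ are used: the resulting blow-up rate is $\exp(Cr^{-1/\delta})$ with $1/\delta<1$. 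Now I would pass to the inverse: if $\beta_0(r)\le\exp(C r^{-1/\delta}+ \text{lower order})$ as $r\downarrow0$, then $\beta_0^{-1}(R)$, i.e.\ the value of $r$ for which $\beta_0(r)\approx R$, satisfies $r^{-1/\delta}\approx \frac1C\log R$, hence $\beta_0^{-1}(R)\le c(\log R)^{-\delta}$ for large $R$. Therefore
\begin{equation*}
\int_s^\infty \frac{\beta_0^{-1}(R)}{R}\,dR\le c\int_s^\infty \frac{dR}{R(\log R)^{\delta}}=c\int_{\log s}^\infty \frac{du}{u^\delta}<\infty
\end{equation*}
precisely because $\delta>1$. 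With both hypotheses of Theorem \ref{T:com1}(i) verified, the conclusion follows.

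The main obstacle — really the only delicate point — is making the asymptotic inversion $\beta_0(r)\approx\exp(Cr^{-1/\delta})\ \Rightarrow\ \beta_0^{-1}(R)\approx (\log R)^{-\delta}$ rigorous, since $\beta_0$ as defined involves the possibly irregular generalized inverses $\Psi_1^{-1}$ and $\Psi_2$, which need not be continuous or strictly monotone, and one only has one-sided bounds on $W$. I would handle this by working throughout with inequalities rather than asymptotic equivalences: it suffices to produce an explicit increasing majorant $\bar\beta_0(r)\ge\beta_0(r)$ of the form $\bar\beta_0(r)=\exp(C_1 r^{-1/\delta}+C_2\log(1/r)+C_3)$ for $r\in(0,r_0)$, note that $\beta_0^{-1}(R)\le\bar\beta_0^{-1}(R)$ for large $R$ (generalized inverses reverse the pointwise order), compute $\bar\beta_0^{-1}$ honestly, and bound it by $c(\log R)^{-\delta}$; the $\log(1/r)$ term from $r^{-d/\alpha}$ is harmless since it is dominated by $r^{-1/\delta}$ as $r\downarrow 0$. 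One should also record that \eqref{E:11} forces $W\ge1$ (adjusting $c_1$ if necessary, or absorbing constants), so that the standing assumption $W(x)\ge1$ of Theorem \ref{T:com1} is met, and then simply invoke Theorem \ref{T:com1}(i).
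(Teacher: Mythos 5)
Your argument is correct and is essentially the paper's own proof: both deduce the corollary from Theorem \ref{T:com1}(i) by using the lower bound in \eqref{E:11} to get $\Psi_1(r)\le c(1+r)^{\alpha-\beta}$ and the upper bound to get $\Psi_2(r)\le c\exp[c(1+r)^{(\beta-\alpha)/\delta}]$, yielding $\beta_0(r)\le c\exp(Cr^{-1/\delta})$ and hence $\beta_0^{-1}(R)\lesssim(\log R)^{-\delta}$, which is Dini-integrable since $\delta>1$. You merely make explicit the inversion and integrability check that the paper leaves implicit (and your parenthetical ``increasing majorant''/``reverse the order'' slips do not affect the correct inequalities you actually use).
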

\begin{proof} It follows from \eqref{E:11} that $\Psi_1(r)\le c_1 (1+r)^{\alpha-\beta}$ and $$\Psi_2(r)\le c_2\exp[c_3(1+r)^{(\beta-\alpha)/\delta}].$$ Then,
 $$\beta_0(r)\le c_4\exp(c_5(1+r^{-1/\delta})).$$ This along with Theorem \ref{T:com1}(i) yields the desired assertion.\end{proof}

\section{Heat Kernel Estimates and Asymptotic Estimates for High Order Eigenvalues}\label{eigen}
\subsection{Heat kernel estimates and asymptotic estimates for high order eigenvalues related to $L=-W(x)(-\Delta)^{\alpha/2}$}
In this subsection, we always let \emph{$d>\alpha$, and $L=-W(x)(-\Delta)^{\alpha/2}$, where $W(x)$ is a weighted function satisfying \eqref{E:11} with $\beta>\alpha$ and $\delta>1$}.  Then, according to Corollary \ref{E:1}, the operator $L$ generates a compact semigroup on $L^2(\R^d;\mu)$, where $\mu(dx)=W(x)^{-1}\,dx$.
Let us denote by (counting multiplicities) $0<\lambda_1\le \lambda_2\le \cdots\le \cdots $ the eigenvalues of $-L$, and by $\phi_n$ the corresponding eigenfunctions, which we assume to be normalized in $L^2(\R^d;\mu)$. We write $\phi_1$ as $\phi$ for simplicity. The aim of this section is to obtain behavior of high order eigenvalues estimates $\lambda_n$.
First, we have the following property for $\phi$.
\begin{lemma}\label{L:re} The first eigenvalue $\lambda_1$ is one-dimensional, and the associated eigenfunction $\phi$ has a strictly positive and bounded version such that for all compact sets $A$, $\inf_{x\in A}\phi(x)>0.$\end{lemma}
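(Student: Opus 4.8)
The plan is to establish the three assertions --- positivity, one-dimensionality, and the local lower bound --- by combining general irreducibility/positivity-improving properties of the semigroup $(P_t)_{t\ge0}$ generated by $L$ with the continuity and boundedness information already extracted in the proof of Theorem~\ref{T:com1}. First I would observe that the Dirichlet form $(D,\mathscr{D}(D^\mu))$ is irreducible: since $(D,\mathscr{D}(D^\mu))$ has the same jump kernel $c_{d,\alpha}|x-y|^{-d-\alpha}$ as the symmetric $\alpha$-stable process, the jump measure charges every pair of nonempty open sets, so no nontrivial $\mu$-invariant set can exist; hence $(P_t)_{t\ge0}$ is irreducible. Because we showed in the proof of Theorem~\ref{T:com1}(i) that $P_t$ is ultracontractive (so that $p_\mu(t,x,y)$ exists and is bounded), the standard argument (e.g.\ \cite[Theorem 1.4.3]{WBook} or Perron--Frobenius-type reasoning for positivity-preserving self-adjoint semigroups, cf.\ \cite[Chapter 10]{Gr}) gives that $p_\mu(t,x,y)>0$ for a.e.\ $(x,y)$, and then that the bottom eigenvalue $\lambda_1$ is simple with an eigenfunction $\phi$ that may be chosen strictly positive $\mu$-a.e. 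This yields the one-dimensionality claim.

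Next I would upgrade ``a.e.'' to pointwise statements using regularity. The eigenfunction equation $P_t\phi=e^{-\lambda_1 t}\phi$ together with ultracontractivity gives $\phi=e^{\lambda_1 t}P_t\phi\in L^\infty(\R^d;\mu)$, so $\phi$ has a bounded version; boundedness in $L^\infty(\mu)$ is the same as boundedness in $L^\infty(dx)$ since $W\ge1$ makes $\mu$ and Lebesgue measure mutually absolutely continuous with the same null sets. For the strictly positive and locally bounded-below version, I would use the continuity statement already recorded at the end of the proof of Theorem~\ref{T:com1}: there is a $(D,\mathscr{D}(D^\mu))$-nest $(F_k)_{k\ge1}$ of compact sets with $\bigcup_k F_k=\R^d$ such that $x\mapsto p_\mu(t,x,y)$ is continuous on each $F_k$; hence for fixed $t$ the function $x\mapsto P_t\phi(x)=\int p_\mu(t,x,y)\phi(y)\,\mu(dy)$ is continuous on each $F_k$, and since $\phi=e^{\lambda_1 t}P_t\phi$ this gives a version of $\phi$ that is continuous on each $F_k$. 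Using $p_\mu(t,x,y)>0$ together with the positivity of $\phi$ on a set of positive $\mu$-measure, one gets $P_t\phi(x)>0$ for every $x$, so this version is strictly positive. Finally, on any compact $A$: since $A\subset F_k$ for $k$ large enough (nests exhaust up to a polar set; one can also argue directly because on a compact set $p_\mu(t,\cdot,\cdot)$ is genuinely continuous and strictly positive by the heat-kernel lower bounds available for fractional Laplacian-type Dirichlet forms, cf.\ \cite{KK, BBCK}), continuity and strict positivity on the compact set $F_k$ give $\inf_{x\in A}\phi(x)\ge\inf_{x\in F_k}\phi(x)>0$.

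The main obstacle I anticipate is the passage from the ``quasi-continuous on a nest'' regularity --- which is all one gets for free from \cite{BBCK} in this nonlocal, possibly-unbounded-coefficient setting --- to genuine continuity (or at least a strictly-positive-on-compacts lower bound) of $\phi$. Two routes are available: either invoke a Harnack-type inequality / interior continuity estimate for $L$-harmonic and $L$-caloric functions (the jump kernel is comparable to that of the $\alpha$-stable process and $W$ is locally bounded above and below, so local De Giorgi--Nash--Moser theory for nonlocal forms applies and upgrades the quasi-continuous version to a continuous one), or work directly with the explicit two-sided Dirichlet heat kernel estimates for the fractional Laplacian (see \cite{KK} and references therein) localized to bounded sets, which already give continuity and strict positivity of $p_\mu(t,\cdot,\cdot)$ on compacts without invoking abstract nest arguments. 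Either way, once local continuity and a local positive lower bound for $p_\mu$ are in hand, the conclusion $\inf_{x\in A}\phi(x)>0$ follows immediately by integrating against $\phi$, which is nonnegative and not identically zero. I would present the De Giorgi--Nash--Moser / Harnack route as the cleanest, citing the nonlocal-form literature, and keep the explicit heat-kernel route as a remark.
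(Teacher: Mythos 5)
Your proposal is correct in substance and follows the same overall skeleton as the paper (irreducibility, then a Perron--Frobenius argument for simplicity and positivity, ultracontractivity for boundedness, and a regularity/positivity step for the local lower bound), but two of its ingredients are handled by genuinely different means. For irreducibility, you argue at the level of the Dirichlet form: the time change leaves the jump kernel $c_{d,\alpha}|x-y|^{-d-\alpha}$ unchanged, and a strictly positive jump kernel rules out nontrivial invariant sets; the paper instead argues probabilistically, comparing $X^\mu$ on a large ball $B(x_0,n)$ with an auxiliary $\alpha$-stable-like process whose coefficient is globally bounded above and below, and then invoking the two-sided Dirichlet heat kernel estimates of \cite{CKS,KK} to see that the killed transition probability is strictly positive. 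Your form-level argument is shorter and more elementary, while the paper's comparison device is reused later (for the on-diagonal lower bound in Theorem \ref{T:main}), so it costs nothing extra there. For the final step, $\inf_{x\in A}\phi(x)>0$, you correctly flag the weak point of the nest-based route: the continuity from \cite{BBCK} is only on each $F_k$, and a given compact $A$ need not be contained in any single $F_k$, so continuity-on-a-nest alone does not immediately yield a uniform lower bound on $A$ (the paper's own wording is terse on exactly this point). Your fallback --- localizing to a ball containing $A$, comparing with a stable-like process there, and using the pointwise Dirichlet heat kernel lower bound together with $\int_K\phi\,d\mu>0$ on some compact $K$ of positive measure to bound $\phi(x)=e^{\lambda_1 t}P_t\phi(x)$ from below uniformly on $A$ --- is sound and is in fact the same comparison mechanism the paper uses elsewhere; it even avoids needing any continuity of $\phi$. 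The only presentational difference of note is that the paper obtains simplicity and the strictly positive version from irreducibility via \cite[Chapter V, Theorem 6.6]{Sch}, whereas you invoke the equivalent positivity-improving/Perron--Frobenius statement for symmetric Markov semigroups; either citation does the job.
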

\begin{proof}For any open set $A$ of $\R^d$, let $\tau_{A}^{X^\mu}$ be the first exit time of the process $X^\mu$ from $A$, i.e., $\tau_A^{X^\mu}=\inf\{t>0:X^\mu_t\notin A\}.$ For any $x_0\in \R^d$ and $A\subset\R^d$, we choose $n:=n(x_0,A)\ge1$ large enough such that $A\subset B(x_0,n)$. Then for $t>0$,
\begin{align*}\Pp^{x_0}(X_t^\mu\in A)&\ge \Pp^{x_0}(X_t^\mu\in A, \tau_{B(x_0,n)}^{X^\mu}>t)=\hat\Pp^{x_0}(\hat X_t\in A,\tau_{B(x_0,n)}^{\hat X}>t),\end{align*} where $(\hat X_t)_{t\ge0}$ is a symmetric Hunt process on $\R^d$ with the operator $\hat L=-\hat a (-\Delta)^{\alpha/2}$, and $0<a_1(x_0,n)=:a_1\le \hat a(z)\le a_2:=a_2(x_0,n)<\infty$ for all $z\in \R^d$ such that $\hat a(z)=W(z)$ for all $z\in B(x_0,n)$. Clearly, $(\hat X_t)_{t\ge0}$ is a symmetric $\alpha$-stable-like process associated with the operator $\hat L$ on $L^2(\R^d;\hat \mu)$ and $\hat\mu(dx)=\hat a(x)^{-1}\,dx.$ Since $\hat a$ is bounded from above and below, $\hat \mu$ is comparable to the Lebesgue measure. Let $\hat p^{\hat \mu}_{B(x_0,n)}(t,x_1,x_2)$ be the Dirichlet heat kernel with respect to $\hat \mu$ of the process $\hat X$ killed on exiting from $B(x_0,n)$. It follows from \cite[Theorem 1.1]{CKS} or \cite[Theorem 1.2]{KK} that
$$\hat\Pp^{x_0}(\hat X_t\in A,\tau_{B(x_0,n)}^{\hat X}>t)=\int_A \hat p^{\hat \mu}_{B(x_0,n)}(t,x_0,y)\,\hat \mu(dy)>0.$$ Thus, $\Pp^{x_0}(X_t^\mu\in A)>0$; that is, the semigroup $(P_t)_{t\ge0}$ is irreducible. Hence, the claims that the first eigenvalue $\lambda_1$ is one-dimensional and the existence of a strictly positive version of the associated first eigenfunction are a consequence of \cite[Chapter V, Theorem 6.6]{Sch}.

Furthermore, according to part (1) in the proof of Theorem \ref{T:com1}, $(P_t)_{t\ge0}$ is ultracontractive. So,
$\phi(x)=e^{\lambda_1 t} P_t\phi(x)\le e^{\lambda_1 t}\|P_t\|_{L^2(\R^d;\mu)\to L^\infty (\R^d,\mu)}<\infty.$ This proves that $\phi$ is bounded. Moreover, let $p_\mu(t,x,y)$ be the heat kernel of $(P_t)_{t\ge0}$ with respect to the measure $\mu$. Also by part (1) in the proof of Theorem \ref{T:com1},
there is a sequence of compact sets $(F_k)_{k\ge1}$ such that $\cup_{k=1}^\infty F_k=\R^d$ and that for every $t>0$ and $y\in \R^d$, $x\mapsto p_\mu(t,x,y)$ is continuous on each $F_k$. Then, by the facts that
$$\phi(x)=e^{\lambda_1 t} P_t\phi(x)=e^{\lambda_1 t}\int_{\R^d} p_\mu(t,x,y)\phi(y)\,\mu(dy),\quad x\in \R^d,t>0 $$ and $\phi$ is strictly positive, we can get that for all compact sets $A$, $\inf_{x\in A}\phi(x)>0.$
 \end{proof}

In the following, we always assume that $\phi$ is strictly positive and bounded such that for all compact sets $A$, $\inf_{x\in A}\phi(x)>0.$ We have the following lower bound estimate for $\phi$.
\begin{lemma}\label{l:low}Assume that $d>\alpha$. Then there exists a constant $c>0$ such that for all $x\in \R^d$,
$$\phi(x)\ge c(1+|x|)^{\alpha-d}.$$ \end{lemma}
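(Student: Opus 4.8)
The plan is to exploit the probabilistic representation of $\phi$ together with Dirichlet heat kernel lower bounds for $\alpha$-stable-like processes. Recall from Lemma \ref{L:re} that $\phi$ has a strictly positive bounded version and $\phi(x)=e^{\lambda_1 t}P_t\phi(x)=e^{\lambda_1 t}\Ee^x[\phi(X_t^\mu)]$ for all $t>0$. The idea is that for $|x|$ large, the process $X^\mu=X_{\tau_\cdot}$ is a time-change of the $\alpha$-stable process $X$ by the (slow, since $W$ is large) clock $A_t=\int_0^t W(X_s)^{-1}\,ds$; to get a lower bound on $\phi(x)$ it suffices to drive the process into a fixed compact set $K$ (say $K=\overline{B(0,1)}$) where $\inf_K\phi>0$, within $\mu$-time $t=1$, and estimate the probability of that event from below.

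First I would fix $K=\overline{B(0,1)}$ and $\varepsilon_0:=\inf_{z\in K}\phi(z)>0$, so that
$$\phi(x)\ge e^{\lambda_1}\,\varepsilon_0\,\Pp^x\bigl(X_1^\mu\in K\bigr).$$
Next, for a given $x$ with $|x|=R$ large, I would localize: choose the ball $B:=B(0,2R)$, let $\tau_B^{X^\mu}$ be the exit time of $X^\mu$ from $B$, and bound
$$\Pp^x\bigl(X_1^\mu\in K\bigr)\ge \Pp^x\bigl(X_1^\mu\in K,\ \tau_B^{X^\mu}>1\bigr)=\widehat\Pp^x\bigl(\widehat X_{\sigma_1}\in K,\ \tau_B^{\widehat X}>\sigma_1\bigr),$$
where $\widehat X$ is, as in the proof of Lemma \ref{L:re}, an $\alpha$-stable-like process with coefficient $\widehat a$ bounded above and below on $\R^d$ and equal to $W$ on $B$, and $\sigma_t$ is the corresponding time-change. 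On the event $\tau_B^{\widehat X}>\sigma_1$, the clock $A_s$ only sees values of $W$ inside $B$, where by \eqref{E:11} $W(z)\le c_2\exp[c_3(1+2R)^{(\beta-\alpha)/\delta}]=:M_R$. Hence $A_s\ge s/M_R$, which forces $\sigma_1\le M_R$, so it is enough to lower-bound the probability that the $\alpha$-stable-like process $\widehat X$, started at $x$, lands in $K$ at some fixed time $T_R\le M_R$ without leaving $B$. Concretely one may take $\widehat X$ run for time $T_R:=M_R$ (rescaling shows the event "$\widehat X_u\in K$ for some/one deterministic time and $\tau_B^{\widehat X}>u$" has probability controlled by the Dirichlet heat kernel $\widehat p^{\widehat\mu}_B(u,x,\cdot)$) and invoke the two-sided Dirichlet heat kernel estimates for stable-like processes from \cite[Theorem 1.1]{CKS} or \cite[Theorem 1.2]{KK}:
$$\widehat p^{\widehat\mu}_B(u,x,y)\ \gtrsim\ \Bigl(1\wedge\frac{\delta_B(x)^{\alpha/2}}{\sqrt u}\Bigr)\Bigl(1\wedge\frac{\delta_B(y)^{\alpha/2}}{\sqrt u}\Bigr)\Bigl(u^{-d/\alpha}\wedge\frac{u}{|x-y|^{d+\alpha}}\Bigr)$$
for $u$ up to a time comparable to $\operatorname{diam}(B)^\alpha\asymp R^\alpha$, with $\delta_B$ the distance to $\partial B$. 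For $x$ with $|x|=R$ one has $\delta_B(x)\asymp R$, $\delta_B(y)\asymp 1$ for $y\in K$, and $|x-y|\asymp R$; choosing the running time $u$ of order $R^\alpha$ (note $M_R\ge R^\alpha$ for large $R$ since the upper bound in \eqref{E:11} grows faster than any power, so a valid $u\le M_R$ in the admissible range exists) makes the prefactors $\asymp 1$ and $u^{-d/\alpha}\asymp R^{-d}$ versus $u/|x-y|^{d+\alpha}\asymp R^{\alpha-d-\alpha}=R^{-d}$, so both terms are $\asymp R^{-d}$; integrating over $y\in K$ (volume $\asymp 1$) gives $\widehat\Pp^x(\cdots)\gtrsim R^{-d}\asymp (1+|x|)^{-d}$. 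Since $d>\alpha$ we may absorb this into the claimed bound $\phi(x)\ge c(1+|x|)^{\alpha-d}$, which is weaker; for bounded $|x|$ the claim is immediate from $\inf_K\phi>0$ and compactness, after enlarging $K$.

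The main obstacle will be bookkeeping the time-change carefully: one must make sure the deterministic running time $u$ of the stable-like process lies simultaneously (a) below $M_R$ so that it is reached before $\mu$-time $1$ on the no-exit event, and (b) within the range of validity of the Dirichlet heat kernel estimate (times up to $\asymp R^\alpha$, i.e.\ up to the scale of the domain). This is where the two-sided bound in \eqref{E:11} is used in an essential way: the exponential upper bound on $W$ guarantees $M_R\ge R^\alpha$, so the window $[R^\alpha, M_R]$ is nonempty and we may pick $u\asymp R^\alpha$; the polynomial lower bound $W(x)\ge c_1(1+|x|)^\beta$ is what makes the compact set $K$ relevant (it guarantees the reference measure $\mu$ still has the right local behavior). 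A secondary technical point is justifying the reduction $\Pp^x(X_1^\mu\in K)\ge\widehat\Pp^x(\cdots)$ via the identification of the killed time-changed process with the time-change of the killed process, which is exactly the construction recalled in the proof of Lemma \ref{L:re} and follows from \cite[Section 5.5.2]{CF}; modulo that, the estimate is a direct application of \cite{CKS, KK}.
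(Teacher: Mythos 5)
There is a genuine gap, and in fact the conclusion you reach is quantitatively too weak for the lemma. Even granting every probabilistic step, your computation ends with $\Pp^x(X^\mu_1\in K)\gtrsim (1+|x|)^{-d}$, hence $\phi(x)\ge c(1+|x|)^{-d}$. Since $d>\alpha>0$, the bound $(1+|x|)^{\alpha-d}$ claimed in the lemma is \emph{larger} than $(1+|x|)^{-d}$ for large $|x|$; your remark that the claimed bound ``is weaker'' is backwards, so the lemma does not follow from your estimate. The missing factor $(1+|x|)^{\alpha}$ is precisely what one gains by integrating over time rather than evaluating the (Dirichlet) heat kernel at a single time $u\asymp R^{\alpha}$: the paper's proof is the Green-function identity in disguise. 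It writes the eigenequation $(-\Delta)^{\alpha/2}\phi=\lambda_1\phi/W$ and inverts by the Riesz potential, $\phi(x)=c_{d,\alpha}\lambda_1\int_{\R^d}\phi(y)W(y)^{-1}|x-y|^{\alpha-d}\,dy\ge c_1\int_{\{|y|\le 1\}}|x-y|^{\alpha-d}\,dy\ge c_2(1+|x|)^{\alpha-d}$, using only $\inf_{|y|\le1}\phi>0$ and $W\asymp1$ on the unit ball. Probabilistically this is the expected $\mu$-occupation time of the unit ball, $\asymp\int_{K}|x-y|^{\alpha-d}\,dy$, not the probability of being in $K$ at one fixed $\mu$-time; if you want a probabilistic proof you should use $\phi=\lambda_1\int_0^\infty P_t\phi\,dt$ (or the resolvent of $X^\mu$) rather than $\phi=e^{\lambda_1}P_1\phi$.

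Two further steps in your reduction would also need repair even for the weaker exponent. First, the event $\{\widehat X_{\sigma_1}\in K,\ \tau_B^{\widehat X}>\sigma_1\}$ concerns the \emph{random} time $\sigma_1$ determined by the additive functional; knowing $\sigma_1\le M_R$ on the no-exit event does not reduce the problem to the probability that $\widehat X$ lies in $K$ at one deterministic time $T_R$ --- the process can be in $K$ at time $T_R$ while $\widehat X_{\sigma_1}\notin K$ --- and the Dirichlet heat kernel only controls deterministic times. You would have to control position and clock simultaneously (e.g.\ force the process to remain in a neighbourhood of $K$ long enough for $A$ to exceed $1$), which is a genuinely different argument. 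Second, your localizing ball is $B(0,2R)$ with $R=|x|\to\infty$ and the auxiliary coefficient equals $W$ there, so its oscillation $a_2/a_1$ grows at least like $R^{\beta}$; the two-sided bounds of \cite{CKS,KK} in the form you quote, with constants independent of $R$ and valid up to times $\asymp R^{\alpha}$, are available for the fractional Laplacian by scaling, or for a fixed process on a fixed ball (which is how the paper uses them in Lemma \ref{L:re}), but not uniformly over this degenerating family, so the final constant would depend on $R$.
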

\begin{proof}
Note that for all $x\in \R^d$,
$$(-\Delta)^{\alpha/2}\phi(x)=\frac{\lambda_1 \phi(x)}{W(x)}.$$
Then, for $d>\alpha$, due to the fact that $\inf_{|x|\le 1}\phi(x)>0$,
\begin{align*}
\phi(x)=&c_{d,\alpha}\lambda_1\int_{\R^d} \frac{ \phi(y)}{W(y) |x-y|^{d-\alpha}}\,dy\ge c_1\int_{\{|y|\le 1\}} \frac{1}{|x-y|^{d-\alpha}}\,dy\ge c_2(1+|x|)^{\alpha-d}.
\end{align*} The proof is completed.
\end{proof}

We also need upper bounds for $\phi$.
\begin{lemma}\label{L:up} Assume that $d>\alpha$. Then, there exists a constant $c>0$ such that for all $x\in \R^d$,
\begin{equation}\label{p:el0}\phi(x)\le c (1+|x|)^{\alpha-d} \left(1+\I_{\big\{\frac{d-\beta}{\beta-\alpha}\in \Z_+\big\}}\log (2+|x|)\right),\end{equation} where $\Z_+:=\{0,1,2,\cdots\}.$ \end{lemma}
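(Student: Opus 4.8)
The plan is to turn the eigenvalue equation into an integral equation and run a bootstrap (iteration) argument on the decay rate of $\phi$. Since $(-\Delta)^{\alpha/2}\phi = \lambda_1 \phi/W$ and $d>\alpha$, inverting the fractional Laplacian via the Riesz potential gives
\[
\phi(x) = c_{d,\alpha}\,\lambda_1 \int_{\R^d}\frac{\phi(y)}{W(y)\,|x-y|^{d-\alpha}}\,dy.
\]
We already know $\phi$ is bounded (Lemma \ref{L:re}), so $\phi(y)\le C$; combined with the lower bound $W(y)\ge c_1(1+|y|)^\beta$ from \eqref{E:11}, the integrand is dominated by $C(1+|y|)^{-\beta}|x-y|^{\alpha-d}$. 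Splitting the integral over the regions $\{|y|\le |x|/2\}$, $\{|x-y|\le |x|/2\}$, and the complement, and using $\beta>\alpha$ (so $d+\beta-\alpha>d$, giving integrability at infinity in the far region), one gets a first improved bound of the form $\phi(x)\le C(1+|x|)^{-\gamma_1}$ with $\gamma_1 = \min\{d-\alpha,\ \beta-\alpha\}$ (the $d-\alpha$ coming from the near-diagonal piece, exactly as in Lemma \ref{l:low}, and $\beta-\alpha$ from the decay of $1/W$).

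Next I would iterate: feeding $\phi(y)\le C(1+|y|)^{-\gamma_k}$ back into the integral and again bounding $W(y)\ge c_1(1+|y|)^\beta$, the far-field contribution decays like $(1+|x|)^{-(\gamma_k+\beta-\alpha)}$ as long as $\gamma_k+\beta > d$ keeps the integral convergent, while the near-diagonal piece always contributes $(1+|x|)^{\alpha-d}$ and the middle piece is comparable. This produces the recursion $\gamma_{k+1} = \min\{d-\alpha,\ \gamma_k+\beta-\alpha\}$. Since each step adds $\beta-\alpha>0$, after finitely many steps the running exponent reaches or exceeds $d-\alpha$, and the iteration stabilizes at $\gamma_\infty = d-\alpha$, yielding $\phi(x)\le C(1+|x|)^{\alpha-d}$. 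This is the generic conclusion, matching the leading term of \eqref{p:el0}.

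The delicate point — and the source of the logarithmic factor — is the borderline case in the iteration. The far-field integral $\int_{|y|\ge |x|}(1+|y|)^{-\beta}(1+|y|)^{-\gamma_k}|x-y|^{\alpha-d}\,dy$ behaves like $(1+|x|)^{-\gamma_k-\beta+\alpha}$ when $\gamma_k+\beta-\alpha < d-\alpha$, but when some $\gamma_k$ lands exactly so that $\gamma_k + \beta - \alpha = d-\alpha$, i.e. when $\gamma_k = d-\beta$, the two competing powers coincide and the convolution integral produces a logarithm: $\int (1+|y|)^{-(d-\alpha)-(\beta-\alpha)}|x-y|^{\alpha-d}dy \sim (1+|x|)^{\alpha-d}\log(2+|x|)$ because the integral over the annulus $|x|\le|y|\le 2|x|$ contributes a factor $\log$. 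Tracking when this happens: starting from $\gamma_1 = \min\{d-\alpha,\beta-\alpha\}$ and adding $\beta-\alpha$ at each step, the value $d-\beta$ is hit exactly when $d-\beta - \gamma_1$ is a nonnegative integer multiple of $\beta-\alpha$; since $\gamma_1\le \beta-\alpha$, this is equivalent (after checking the two cases $\beta\le d$ and the trivial range) to $\frac{d-\beta}{\beta-\alpha}\in\Z_+$, which is precisely the indicator condition in the statement. So the main obstacle is bookkeeping: carefully identifying for which parameter values the iteration passes through the resonant exponent $d-\beta$, verifying the logarithm is picked up at most once (subsequent steps push past $d-\alpha$ and kill any further growth), and confirming that when the condition fails the bound is clean. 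I would organize this by proving a general lemma: if $\phi(x)\le C(1+|x|)^{-a}(\log(2+|x|))^m$ then the Riesz potential against $(1+|y|)^{-\beta}$ obeys an explicit bound splitting into the cases $a+\beta-\alpha < d-\alpha$, $=$, and $>$, then apply it inductively. The near-diagonal and intermediate estimates are routine once $\phi$ is known to be bounded and the only real work is the far-field asymptotics of these Riesz-type convolutions.
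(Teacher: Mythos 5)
Your proposal is essentially the paper's own proof: the paper likewise inverts the eigenvalue equation via the Riesz potential and iterates, with the case analysis of the convolution $\int|x-y|^{-(d-\alpha)}(1+|y|)^{-\beta'}dy$ (including the logarithmic borderline $\beta'=d$ that produces the indicator condition) supplied by the quoted estimate \eqref{p:el1} from \cite[Lemma 6.1]{MS12-2} rather than proved from scratch. Your bookkeeping of the exponent recursion and of when the resonant case $\beta+k(\beta-\alpha)=d$ is hit matches the paper's argument, so the approach is the same.
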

\begin{proof} The proof is based on the following estimates obtained in \cite[Lemma 6.1]{MS12-2}. Let $\gamma, \beta>0$ such that $\gamma<d$ and $\gamma+\beta>d$. Set $$J(x)=\int_{\R^d}\frac{1}{|x-y|^\gamma(1+|y|)^\beta}\,dy.$$ Then, there is a constant $c_1>0$ such that
\begin{equation}\label{p:el1}
J(x)\le
\begin{cases}
c_1(1+|x|)^{d-(\gamma+\beta)}, &\text{if}\ \beta<d,\\
c_1(1+|x|)^{-\gamma}\log(2+|x|), &\text{if}\ \beta=d,\\
c_1(1+|x|)^{-\gamma},&\text{if}\ \beta>d.
\end{cases}
\end{equation}

As in the proof of Lemma \ref{l:low}, we have
\begin{equation}\label{e:phi}\phi(x)=c_{d,\alpha}\lambda_1\int_{\R^d} \frac{\phi(y)}{|x-y|^{d-\alpha}W(y)}\,dy.\end{equation}Since, by Lemma \ref{L:re}, $\phi$ is strictly positive and bounded, the required assertion for the case that $\beta\ge d$ immediately follows from \eqref{p:el1} and the fact that $W(x)\ge c_0(1+|x|)^\beta$ as indicated in \eqref{E:11}. When $\beta<d$, we first get from \eqref{p:el1} and the fact $W(x)\ge c_0(1+|x|)^\beta$ that $$\phi(x)\le c_1 (1+|x|)^{-\beta+\alpha},\quad x\in \R^d.$$ Then, according to \eqref{e:phi} and also the fact that $W(x)\ge c_0(1+|x|)^\beta$, we deduce
\begin{align*}\phi(x)&\le c_2\int_{\R^d} \frac{1}{|x-y|^{d-\alpha}(1+|y|)^{\beta+(\beta-\alpha)}}\,dy.\end{align*} If $\beta+(\beta-\alpha)\ge d$, then the desired assertion follows from \eqref{p:el1} again; otherwise, we iterate this argument $k$ times, $k$ being the smallest integer such that $\beta+k(\beta-\alpha)\ge d$, and then can obtain the desired conclusion, also thanks to \eqref{p:el1}.\end{proof}

\begin{proposition}\label{P:nash}Assume that $d>\alpha$. Then, there exists a constant $c_1>0$ such that for any $u\in C_c^\infty(\R^d)$,
$$\left(\int_{\R^d} u^2\,d\mu\right)^{(d+\alpha)/d}\le c_1 D(u,u)\left(\int_{\R^d}|u|\phi^{({\beta/(2\alpha))\wedge1}}\,d\mu\right)^{2\alpha/d}.$$
\end{proposition}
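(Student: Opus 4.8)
The plan is to obtain the inequality from the fractional Sobolev inequality on $\R^d$ by a single weighted application of Hölder's inequality, the whole weight bookkeeping being reduced to one pointwise comparison between $W^{-1}$ and a suitable power of the ground state $\phi$.

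\emph{Step 1: a pointwise comparison.} Set $\theta=(\beta/(2\alpha))\wedge 1$, so that $2\alpha\theta\le\beta$. The lower bound in \eqref{E:11} gives $W(x)^{-1}\le c(1+|x|)^{-\beta}$, while Lemma~\ref{l:low} gives $\phi(x)\ge c(1+|x|)^{\alpha-d}$, hence $\phi(x)^{2\alpha\theta/(d-\alpha)}\ge c(1+|x|)^{-2\alpha\theta}$. Since $1+|x|\ge 1$ and $2\alpha\theta\le\beta$, these combine to a pointwise bound
\begin{equation*}
W(x)^{-1}\le C_0\,\phi(x)^{2\alpha\theta/(d-\alpha)},\qquad x\in\R^d,
\end{equation*}
for some $C_0>0$; equivalently, $\phi^{-2\alpha\theta/(d-\alpha)}W^{-1}$ is bounded on $\R^d$. (Only the \emph{lower} bounds on $W$ and on $\phi$ are used here; the exponential upper bound in \eqref{E:11} plays no role in this proposition.)

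\emph{Step 2: Hölder against the Sobolev inequality.} Recall the fractional Sobolev inequality for $d>\alpha$ (equivalently, \eqref{T:sup1} with $N(r)=r^{d/(d-\alpha)}$, as used in the proof of Theorem~\ref{T:com1}): with $2^{*}:=2d/(d-\alpha)$ there is $S>0$ with $\|u\|_{L^{2^{*}}(\R^d;dx)}^{2}\le S\,D(u,u)$ for all $u\in C_c^\infty(\R^d)$. Let $p=\tfrac{d+\alpha}{d-\alpha}$ and $q=\tfrac{d+\alpha}{2\alpha}$, which are conjugate. For $u\in C_c^\infty(\R^d)$ one writes
\begin{equation*}
\int_{\R^d}u^{2}W^{-1}\,dx=\int_{\R^d}\bigl(|u|^{2^{*}}\phi^{-2\alpha\theta/(d-\alpha)}W^{-1}\bigr)^{1/p}\bigl(|u|\,\phi^{\theta}W^{-1}\bigr)^{1/q}\,dx,
\end{equation*}
the identity being a direct check: in the product the powers of $\phi$ cancel, while the powers of $|u|$ and of $W$ add up to $2$ and $-1$. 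Hölder's inequality and then Step~1 give
\begin{equation*}
\mu(u^{2})\le\Bigl(\int_{\R^d}|u|^{2^{*}}\phi^{-2\alpha\theta/(d-\alpha)}W^{-1}\,dx\Bigr)^{1/p}\Bigl(\int_{\R^d}|u|\,\phi^{\theta}\,d\mu\Bigr)^{1/q}\le C_0^{1/p}\Bigl(\int_{\R^d}|u|^{2^{*}}\,dx\Bigr)^{1/p}\Bigl(\int_{\R^d}|u|\,\phi^{\theta}\,d\mu\Bigr)^{1/q}.
\end{equation*}
Since $2^{*}/p=2d/(d+\alpha)$ and $1/q=2\alpha/(d+\alpha)$, the Sobolev inequality turns this into
\begin{equation*}
\mu(u^{2})\le c\,D(u,u)^{d/(d+\alpha)}\Bigl(\int_{\R^d}|u|\,\phi^{\theta}\,d\mu\Bigr)^{2\alpha/(d+\alpha)},
\end{equation*}
and raising both sides to the power $(d+\alpha)/d$ yields the assertion.

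\emph{Where the work is.} Given the exponents, the manipulation is routine, so the only real content is Step~1: one must recognise that the correct power of $\phi$ to insert in the $L^1$-term is precisely the one for which $\phi^{-\cdot}\,W^{-1}$ stays bounded, and that by Lemma~\ref{l:low} together with \eqref{E:11} this boundedness is controlled exactly by the constraint $2\alpha\theta\le\beta$ — which is why the exponent $(\beta/(2\alpha))\wedge1$ appears. All the integrals above are finite because $u$ has compact support and, by Lemma~\ref{L:re}, $\phi$ is bounded and bounded away from $0$ on compact sets, so the applications of Hölder's inequality and of the Sobolev inequality are legitimate. (For $\beta\ge2\alpha$, i.e.\ $\theta=1$, one may alternatively pass through the ground-state transform $u\mapsto\phi^{-1}u$ and the plain Nash inequality for the transformed form, but the argument above covers all $\beta>\alpha$ at once.)
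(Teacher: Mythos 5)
Your proof is correct and takes essentially the same route as the paper: H\"older's inequality with the conjugate exponents $(d+\alpha)/(d-\alpha)$ and $(d+\alpha)/(2\alpha)$ played against the fractional Sobolev inequality, with the weight handled through the pointwise comparison $W^{-(d-\alpha)/(2\alpha)}\le C\,\phi^{(\beta/(2\alpha))\wedge1}$ derived from the lower bound in \eqref{E:11} and Lemma \ref{l:low}; your Step 1 is exactly this comparison, merely inserted inside the H\"older factors rather than applied after the splitting as the paper does. No gaps.
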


\begin{proof} By the H\"{o}lder inequality, we find that for any $u\in C_c^\infty(\R^d)$,
\begin{equation}\label{e:bound}\begin{split} \int_{\R^d}|u|^2\,d\mu=&\int_{\R^d} |u(x)|^2W(x)^{-1}\,dx\\
=&\int_{\R^d}|u(x)|^{2d/(d+\alpha)}|u(x)|^{2\alpha/(d+\alpha)}W(x)^{-1}\,dx\\
\le &\left( \int_{\R^d}|u(x)|^{2d/(d-\alpha)}\,dx\right)^{(d-\alpha)/(d+\alpha)}\\
&\times \left(\int_{\R^d} |u(x)| W(x)^{-(d+\alpha)/(2\alpha)}\,dx\right)^{2\alpha/(d+\alpha)}\\
= &\left( \int_{\R^d}|u(x)|^{2d/(d-\alpha)}\,dx\right)^{(d-\alpha)/(d+\alpha)}\\
&\times \left(\int_{\R^d} |u(x) | W(x)^{-(d-\alpha)/(2\alpha)}\,\mu(dx)\right)^{2\alpha/(d+\alpha)}\\
\le& cD(u,u)^{d/(d+\alpha)}\left(\int_{\R^d} |u(x) | (1+|x|)^{-\beta(d-\alpha)/(2\alpha)}\,\mu(dx)\right)^{2\alpha/(d+\alpha)},
\end{split}\end{equation} where the last inequality follows from the fractional Sobolev inequality and the fact that $W(x)\ge c_0(1+|x|)^\beta$. The inequality above along with Lemma \ref{l:low} and $d>\alpha$ yields the desired assertion. \end{proof}

The following statement follows from Proposition \ref{P:nash}, Lemma \ref{L:up} and Theorem \ref{T:w1}.

\begin{proposition}\label{P:hk} Assume that $d>\alpha$. Then $(P_t)_{t\ge0}$ has a kernel $p_\mu(t,x,y)$ with respect to $\mu$ such that there exists a constant $c_1:=c_1(\alpha,\beta)>0$ such that for any $x,y\in \R^d$ and $t>0$,
$$p_\mu(t,x,y)\le c_1 t^{-d/\alpha}\phi^{(\beta/(2\alpha)\wedge1)}(x)\phi^{(\beta/(2\alpha)\wedge1)}(y).$$
\end{proposition}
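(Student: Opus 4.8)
The plan is to read Proposition~\ref{P:hk} off an abstract criterion that upgrades a weighted Nash inequality to a pointwise heat kernel bound --- this is the role of Theorem~\ref{T:w1} --- fed with the specific weighted Nash inequality of Proposition~\ref{P:nash}. Put $w:=\phi^{(\beta/(2\alpha))\wedge1}$. Proposition~\ref{P:nash} says exactly that $\|u\|_{L^2(\mu)}^{2(d+\alpha)/d}\le c\,D(u,u)\,\|u\|_{L^1(w\,d\mu)}^{2\alpha/d}$ for $u\in C_c^\infty(\R^d)$; writing the left exponent as $2+4/\nu$ and the right exponent as $4/\nu$ forces $\nu=2d/\alpha$, which is precisely the scaling parameter that produces the on-diagonal rate $t^{-\nu/2}=t^{-d/\alpha}$. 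The content of Theorem~\ref{T:w1} is that such a weighted Nash inequality, for an admissible weight $w$, improves to $p_\mu(t,x,y)\le c_1\,t^{-d/\alpha}\,w(x)\,w(y)$, which is the asserted estimate.

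Before invoking Theorem~\ref{T:w1} I would verify its two standing requirements. First, that $(P_t)_{t\ge0}$ genuinely admits a density with respect to $\mu$ and a version regular enough for a pointwise statement: this was already established in part~(1) of the proof of Theorem~\ref{T:com1}. Indeed, under \eqref{E:11} the hypotheses of Theorem~\ref{T:com1}(i) hold, so $(P_t)_{t\ge0}$ is ultracontractive and $x\mapsto p_\mu(t,x,y)$ is continuous along an $(D,\mathscr{D}(D^\mu))$-nest of compact sets; moreover $\phi$ is bounded by Lemma~\ref{L:re}. Second, that $w$ belongs to the admissible class, which for the criterion at hand amounts to $w\in L^2(\R^d;\mu)$. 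Here I would use Lemma~\ref{L:up}, namely $\phi(x)\le c(1+|x|)^{\alpha-d}\big(1+\I_{\{(d-\beta)/(\beta-\alpha)\in\Z_+\}}\log(2+|x|)\big)$, together with $W(x)\ge c_0(1+|x|)^{\beta}$ from \eqref{E:11}, so that $\mu(dx)\le c_0^{-1}(1+|x|)^{-\beta}\,dx$. A short computation then gives $\int w^2\,d\mu<\infty$: the relevant power of $(1+|x|)$ is $-d\beta/\alpha$ when $\beta\le2\alpha$ (integrable because $\beta>\alpha$) and $2\alpha-2d-\beta$ when $\beta>2\alpha$ (integrable because $d+\beta>2\alpha$), and the borderline logarithmic factor is harmless since in both cases the exponent is strictly subcritical.

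With both requirements in force, Theorem~\ref{T:w1} applies and delivers $p_\mu(t,x,y)\le c_1 t^{-d/\alpha}\phi^{(\beta/(2\alpha))\wedge1}(x)\phi^{(\beta/(2\alpha))\wedge1}(y)$ with $c_1=c_1(\alpha,\beta)$, which is Proposition~\ref{P:hk}.

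Given the machinery assembled earlier I do not anticipate a serious obstacle; the two places needing care are (a) matching the exponents of the Nash inequality of Proposition~\ref{P:nash} to the normalization used in Theorem~\ref{T:w1}, so that the on-diagonal power comes out as $d/\alpha$ and the weight enters as $w(x)w(y)$ (and not, say, as a different power or an additive combination), and (b) the uniform verification that $w=\phi^{(\beta/(2\alpha))\wedge1}\in L^2(\R^d;\mu)$ across the two regimes $\beta\le2\alpha$ and $\beta>2\alpha$ and through the borderline case of Lemma~\ref{L:up} --- this is the step where the hypothesis $\beta>\alpha$ is genuinely consumed.
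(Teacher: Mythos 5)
Your overall route is the paper's: feed the weighted Nash inequality of Proposition \ref{P:nash} (rate $\psi(r)=r^{(d+\alpha)/d}$, hence $\Psi^{-1}(t)\asymp t^{-d/\alpha}$) into Theorem \ref{T:w1}. However, there is a genuine gap in how you check the hypotheses of Theorem \ref{T:w1}: that theorem does not merely require the weight to lie in $L^2(\R^d;\mu)$; it requires $V=\phi^{(\beta/(2\alpha))\wedge 1}$ to be a \emph{Lyapunov function} in the sense of Definition \ref{D:1}, i.e.\ $V\in L^2(\R^d;\mu)$ \emph{and} $P_tV\le e^{ct}V$ for some $c\ge 0$. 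Your assertion that admissibility ``amounts to $w\in L^2(\R^d;\mu)$'' misreads the criterion, and you nowhere verify the semigroup domination. This is precisely the step the paper supplies: when $\beta\ge 2\alpha$ one has $V=\phi$ and $P_t\phi=e^{-\lambda_1 t}\phi\le\phi$; when $\alpha<\beta\le 2\alpha$ one writes $P_tV=P_t\phi^{\beta/(2\alpha)}\le (P_t\phi)^{\beta/(2\alpha)}\le \phi^{\beta/(2\alpha)}=V$ by Jensen's inequality, using that $s\mapsto s^{\beta/(2\alpha)}$ is concave exactly because $\beta/(2\alpha)\le 1$ and that $P_t$ is Markovian. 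Thus $V$ is a Lyapunov function with constant $c=0$, and only with this in hand does Theorem \ref{T:w1} deliver $p_\mu(t,x,y)\le \Psi^{-1}(t)V(x)V(y)\le c_1t^{-d/\alpha}\phi^{(\beta/(2\alpha))\wedge1}(x)\phi^{(\beta/(2\alpha))\wedge1}(y)$. Without the Lyapunov property the appeal to Theorem \ref{T:w1} is not justified as written.

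The rest of your argument matches the paper and is fine: the membership $V\in L^2(\R^d;\mu)$ via Lemma \ref{L:up} and $W(x)\ge c_0(1+|x|)^\beta$ (for $\beta\ge2\alpha$ it is automatic, since $\phi$ is normalized in $L^2(\R^d;\mu)$), and the exponent bookkeeping that produces the on-diagonal rate $t^{-d/\alpha}$ and the multiplicative weight $V(x)V(y)$. One minor correction: the existence (and regularity) of a kernel is part of the \emph{conclusion} of Theorem \ref{T:w1}, not a hypothesis you need to import from the proof of Theorem \ref{T:com1}. The missing Lyapunov verification is short to add, but it is a real hypothesis and must be checked in both regimes $\alpha<\beta\le2\alpha$ and $\beta\ge2\alpha$.
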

\begin{proof}We only prove the case that $\alpha<\beta\le 2\alpha$, since the case that $\beta\ge 2 \alpha$ can be verified similarly. Taking $V=\phi^{\beta/(2\alpha)}$, we get from Lemma \ref{L:up} and the fact that $W(x)\ge c_0(1+|x|)^\beta$ that
\begin{equation}\label{e:upper}\begin{split}\mu(V^2)&=\mu(\phi^{\beta/\alpha})\\
&\le c_0\int_{\R^d} (1+|x|)^{\beta(\alpha-d)/\alpha} (\log^{\beta/\alpha}(2+|x|)) (1+|x|)^{-\beta}\,dx<\infty,\end{split}\end{equation} due to $\beta>\alpha$. On the other hand, since $P_t\phi=e^{-\lambda_1t}\phi\le \phi$ for all $t>0$, by the Jensen inequality and the fact that $\alpha<\beta\le 2\alpha$, we have
$$P_t V=P_t \phi^{\beta/(2\alpha)}\le (P_t\phi)^{\beta/(2\alpha)}\le \phi^{\beta/(2\alpha)}=V,\quad t>0.$$ Therefore, $V$ is a Lyapunov function with constant $c=0$ in the sense of Definition \ref{D:1}.

Furthermore, by Proposition \ref{P:nash}, we know that weighted Nash inequalities hold with such weighted function $V$ and the rate function $\psi(r)=r^{(d+\alpha)/d}.$ Therefore, the desired assertion is a direct consequence of Theorem \ref{T:w1}.\end{proof}

Now, it is a position to present the main result in this section.
\begin{theorem}\label{T:main}Suppose that $d>\alpha$ and $L=-W(x)(-\Delta)^{\alpha/2}$, where $W(x)$ is a weighted function satisfying \eqref{E:11} with $\beta>\alpha$ and $\delta>1$. Then the following two statements hold.
\begin{itemize}
\item[(i)] There exists a constant $c_1\ge1$ such that for all $n\ge1$,
$$c_1^{-1}n^{\alpha/d}\le \lambda_n\le c_1n^{\alpha/d}.$$
\item[(ii)] There exists a constant $c_2\ge1$ such that for all $t\in (0,1]$,
$$c_2^{-1} t^{-d/\alpha}\le \sup_{x,y\in\R^d} \frac{p_\mu(t,x,y)}{\phi^{(\beta/(2\alpha))\wedge1}(x)\phi^{(\beta/(2\alpha))\wedge1}(y)}\le c_2t^{-d/\alpha}.$$
\end{itemize}
 \end{theorem}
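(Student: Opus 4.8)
The plan is to derive part (ii) first and then deduce part (i) from it via the standard correspondence between on-diagonal heat kernel bounds and eigenvalue counting. For (ii), the upper bound
$$\sup_{x,y}\frac{p_\mu(t,x,y)}{\phi^{(\beta/(2\alpha))\wedge1}(x)\phi^{(\beta/(2\alpha))\wedge1}(y)}\le c_2t^{-d/\alpha},\qquad t\in(0,1],$$
is exactly Proposition \ref{P:hk}, so only the matching lower bound needs work. For this I would bound the left-hand side below by the ratio at a single convenient pair of points, say $x=y=0$ (or any fixed point $x_0$), and use $\phi^{(\beta/(2\alpha))\wedge1}(0)\asymp1$ by Lemma \ref{L:re}, reducing the task to $p_\mu(t,0,0)\ge c\,t^{-d/\alpha}$ for small $t$. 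To get this, I would localize: fix a ball $B(0,R)$ on which $W$ is bounded above and below (so $\mu$ is comparable to Lebesgue measure there), compare $p_\mu(t,0,0)$ with the Dirichlet heat kernel $p_{B(0,R)}^{\hat\mu}(t,0,0)$ of the frozen-coefficient $\alpha$-stable-like process $\hat X$ introduced in the proof of Lemma \ref{L:re}, and invoke the sharp Dirichlet heat kernel estimates of \cite{CKS,KK}: for $t$ small relative to $R$, $p_{B(0,R)}^{\hat\mu}(t,0,0)\gtrsim t^{-d/\alpha}$. Since $p_\mu\ge p_{B(0,R)}^{\hat\mu}$ pointwise (killing only decreases the kernel) up to the comparability of the two reference measures on $B(0,R)$, this yields the lower bound.

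For part (i), I would use the eigenfunction expansion $p_\mu(t,x,y)=\sum_{n\ge1}e^{-\lambda_n t}\phi_n(x)\phi_n(y)$. Integrating the on-diagonal kernel against $\mu$ against a suitable reference or simply using the trace, $\sum_{n\ge1}e^{-\lambda_n t}=\int_{\R^d}p_\mu(t,x,x)\,\mu(dx)$. The upper bound in (ii) combined with the integrability estimate $\int_{\R^d}\phi^{(\beta/\alpha)\wedge2}\,d\mu<\infty$ — which is \eqref{e:upper} in the proof of Proposition \ref{P:hk} when $\beta\le2\alpha$, and is analogous when $\beta>2\alpha$ since then the exponent is simply $\phi^2$, integrable because $\phi\in L^2(\mu)$ — gives $\sum_n e^{-\lambda_n t}\le C t^{-d/\alpha}$ for $t\in(0,1]$. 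A Tauberian argument (or the elementary bound $N(\lambda):=\#\{n:\lambda_n\le\lambda\}\le e\sum_n e^{-\lambda_n/\lambda}\le Ce\,\lambda^{d/\alpha}$) then yields $N(\lambda)\le C'\lambda^{d/\alpha}$, equivalently $\lambda_n\ge c_1^{-1}n^{\alpha/d}$. For the reverse, the lower bound on $p_\mu(t,0,0)$ localized to $B(0,R)$ shows $\sum_n e^{-\lambda_n t}\phi_n(x)^2\gtrsim t^{-d/\alpha}$ for $x$ in a fixed ball, and integrating over that ball (where $\mu\asymp$ Lebesgue and $\phi\asymp1$) gives $\sum_n e^{-\lambda_n t}\gtrsim t^{-d/\alpha}$; the same Tauberian correspondence then produces $N(\lambda)\ge c\,\lambda^{d/\alpha}$ for large $\lambda$, i.e. $\lambda_n\le c_1 n^{\alpha/d}$. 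I would arrange the argument so the two-sided trace estimate $c^{-1}t^{-d/\alpha}\le\sum_n e^{-\lambda_n t}\le c\,t^{-d/\alpha}$ for $t\in(0,1]$ is the single hinge from which both halves of (i) follow.

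The main obstacle I anticipate is the lower bound, specifically making the localization/comparison step fully rigorous. Two technical points deserve care: first, the passage from the global kernel $p_\mu$ to the Dirichlet kernel $p_{B(0,R)}^{\hat\mu}$ of the \emph{frozen} process requires that on $B(0,R)$ the generators genuinely agree (which holds since $\hat a\equiv W$ there) and that killing on exiting $B(0,R)$ only decreases the kernel — standard, but one must track the comparability constant between $\mu$ and $\hat\mu$ restricted to $B(0,R)$, which is controlled by $\sup_{B(0,R)}W$ and $\inf_{B(0,R)}W$, both finite by \eqref{E:11}. Second, the Dirichlet heat kernel lower bound from \cite{CKS,KK} is of the form $p_{B(0,R)}^{\hat\mu}(t,x,y)\gtrsim t^{-d/\alpha}$ only in the regime $t\lesssim R^\alpha$ and $x,y$ away from $\partial B(0,R)$; since we only need $t\in(0,1]$ and $x=y=0$, fixing $R$ large enough (say $R=2$) makes this regime automatic, so the restriction is harmless. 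Everything else — the Tauberian step, the integrability of the weighted ground state powers, the eigenfunction expansion — is routine given the results already established in the paper.
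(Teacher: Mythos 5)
Your proposal is correct and follows essentially the same route as the paper: the upper bounds come from Proposition \ref{P:hk} together with the integrability of the weighted ground-state power (as in \eqref{e:upper}), the lower bounds come from comparing $p_\mu$ near the origin with the Dirichlet heat kernel of the frozen-coefficient stable-like process and invoking \cite{CKS,KK}, and the eigenvalue asymptotics follow from the two-sided trace estimate. The only cosmetic differences are that you prove (ii) before (i) and carry out the Tauberian step by hand, whereas the paper delegates it to Proposition \ref{P:ei1}.
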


\begin{proof} (1) According to Proposition \ref{P:hk}, $\phi\in L^2(\R^d;\mu)$ and \eqref{e:upper}, the density function $p_\mu(t,x,y)$ satisfies that for all $t>0$,
\begin{equation}\label{e:jup1}\int_{\R^d} p_\mu(t,x,x)\,\mu(dx)\le c_0t^{-d/\alpha}.\end{equation}

Next, we use the comparison approach again as in the proof of Lemma \ref{L:re}. For any $x\in \R^d$ with $|x|\le1$, $0<r\le 1$ and $t>0$,
\begin{align*}\Pp^x(X_t^\mu\in B(x,r))&\ge \Pp^x(X_t^\mu\in B(x,r), \tau_{B(0,2)}^{X^\mu}>t)\\
&=\hat\Pp^x(\hat X_t\in B(x,r),\tau_{B(0,2)}^{\hat X}>t).\end{align*} Here $\hat X=(\hat X_t)_{t\ge0}$ is a symmetric $\alpha$-stable-like process generated by the operator $\hat L=-\hat a (-\Delta)^{\alpha/2}$ on $L^2(\R^d;\hat \mu)$, where $\hat\mu(dx)=\hat a(x)^{-1}\,dx$ and $0<a_1\le \hat a(x)\le a_2<\infty$ for all $x\in \R^d$ such that $\hat a(x)=W(x)$ for all $x\in B(0,2)$. Let $\hat p^{\hat \mu}_{B(0,2)}(t,x,y)$ be the Dirichlet heat kernel with respect to $\hat\mu$ of the process $\hat X$ killed on exiting from $B(0,2)$. In particular, it holds that for all $x\in \R^d$ with $|x|\le 1$ and $0<t\le 1$,
\begin{equation}\label{e:llow}p_\mu(t,x,x)\ge c_1 \hat p^{\hat \mu}_{B(0,2)}(t,x,x)\ge c_2 t^{-d/\alpha},\end{equation} where we have used \cite[Theorem 1.1]{CKS} or \cite[Theorem 1.2]{KK} in the last inequality.
Therefore, for all $0<t\le 1$,
\begin{equation}\label{e:jup2}\int_{\R^d} p_\mu(t,x,x)\,\mu(dx)\ge \int_{B(0,2)} p_\mu(t,x,x)\,\mu(dx)\ge c_3 t^{-d/\alpha}.\end{equation} Combining \eqref{e:jup1} and \eqref{e:jup2} with Proposition \ref{P:ei1}, we can obtain the desired bounds for $\lambda_n.$

(2) The upper bound of the required assertion immediately follows from Proposition \ref{P:hk}. Note that
$$\sup_{x,y\in\R^d} \frac{p_\mu(t,x,y)}{\phi^{(\beta/(2\alpha))\wedge1}(x)\phi^{(\beta/(2\alpha))\wedge1}(y)}\ge \sup_{x\in \R^d:|x|\le 1} \frac{p_\mu(t,x,x)}{\phi^{(\beta/\alpha)\wedge1}(x)}\ge c_1  \sup_{x\in \R^d: |x|\le 1} p_\mu(t,x,x),$$ where in the last inequality we have used \eqref{p:el0}. This along with \eqref{e:llow} yields that
$$\sup_{x,y\in\R^d} \frac{p_\mu(t,x,y)}{\phi^{(\beta/(2\alpha))\wedge1}(x)\phi^{(\beta/(2\alpha))\wedge1}(y)}\ge c_2t^{-d/\alpha}.$$ The proof is finished.
\end{proof}

\subsection{Further properties for weighted fractional semigroups  generated by $L_0=-(1+|x|)^\beta(-\Delta)^{\alpha/2}$}
From this subsection, we will concentrate on {\it the operator  $L_0=-(1+|x|)^\beta(-\Delta)^{\alpha/2}$ with $d\wedge \beta>\alpha$}. In particular, all  results in the previous subsection hold true for $L_0$.

The following statement can be seen as a complementary of Proposition \ref{P:hk} for the operator $L_0$ in case that $\alpha<\beta< 2\alpha$.
\begin{proposition}\label{P:hkc}Assume that $d>\alpha$ and $\alpha<\beta<2\alpha$. Then $(P_t)_{t\ge0}$ has a kernel $p_\mu(t,x,y)$ with respect to $\mu$ such that for any $x,y\in \R^d$ and $t>0$,
$$p_\mu(t,x,y)\le c_3 t^{-(d+\beta-2\alpha)/(\beta-\alpha)}\phi(x)\phi(y)$$ for some constant $c_3:=c_3(\alpha,\beta)>0$.
\end{proposition}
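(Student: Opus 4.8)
The plan is to mimic the proof of Proposition~\ref{P:hk}, but with a weight $V$ tuned to the smaller exponent $\beta/\alpha$ rather than $\beta/(2\alpha)$; since $\alpha<\beta<2\alpha$, we have $\beta/(2\alpha)<1$, and raising $\phi$ to the full power $1$ (i.e.\ taking $V=\phi$) costs us in the integrability of $V^2=\phi^2$ but buys us a better Nash rate. Concretely, I would first establish a weighted Nash inequality with weight function $\phi$: by the H\"older inequality exactly as in \eqref{e:bound}, splitting $|u|^{2}=|u|^{2d/(d+\alpha)}|u|^{2\alpha/(d+\alpha)}$ and using the fractional Sobolev inequality on the first factor, one gets
\begin{equation*}
\left(\int_{\R^d}u^2\,d\mu\right)^{(d+\alpha)/d}\le c\,D(u,u)\left(\int_{\R^d}|u|\,W(x)^{-(d-\alpha)/(2\alpha)}\,\mu(dx)\right)^{2\alpha/d}.
\end{equation*}
Now instead of estimating $W^{-(d-\alpha)/(2\alpha)}\le c(1+|x|)^{-\beta(d-\alpha)/(2\alpha)}$ and comparing with $\phi$ via Lemma~\ref{l:low} as before, I would compare directly: by Lemma~\ref{l:low}, $\phi(x)\ge c(1+|x|)^{\alpha-d}$, so $W(x)^{-(d-\alpha)/(2\alpha)}\le c(1+|x|)^{-\beta(d-\alpha)/(2\alpha)}\le c\,\phi(x)^{\,\beta(d-\alpha)/(2\alpha(d-\alpha))}=c\,\phi(x)^{\beta/(2\alpha)}$. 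Hmm—this still gives the exponent $\beta/(2\alpha)$, not $1$. The point must rather be to feed the \emph{improved} decay of $\phi$ from Lemma~\ref{L:up}, namely $\phi(x)\le c(1+|x|)^{\alpha-d}$ (up to a log, and since $d-\alpha$ versus $\beta$ positions matter), combined with Lemma~\ref{l:low}, so that $\phi(x)\asymp(1+|x|)^{\alpha-d}$ and hence $W(x)^{-(d-\alpha)/(2\alpha)}=(1+|x|)^{-\beta(d-\alpha)/(2\alpha)}\le c\,\phi(x)^{\beta/(2\alpha)}$ is actually sharp; to reach the weight $\phi$ (power $1$) one must instead use that for $\alpha<\beta<2\alpha$ one has $\beta/(2\alpha)<1<\beta/\alpha$ and interpolate, or redo the H\"older split with exponents matching $(d+\beta-2\alpha)/(\beta-\alpha)$. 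I would therefore repartition: write $|u|^2=|u|^{2\theta}|u|^{2(1-\theta)}$ with $\theta=d/(d+\beta-2\alpha)$ chosen so that the Sobolev factor is $\|u\|_{2d/(d-\alpha)}^{2\theta}$ and the remaining factor, after inserting $W^{-1}$, becomes $\big(\int |u|\,W^{-\gamma}\,\mu(dx)\big)^{2(1-\theta)}$ with $\gamma$ chosen so that $W^{-\gamma}\le c\,\phi$ via $\phi\ge c(1+|x|)^{\alpha-d}$ and $W\ge c_0(1+|x|)^\beta$, i.e.\ $\gamma\beta=d-\alpha$; a short bookkeeping check should confirm that this is consistent and yields the rate $\psi(r)=r^{(d+\beta-2\alpha)/(d-\alpha)}$, equivalently a time power $t^{-(d+\beta-2\alpha)/(2\alpha-\dots)}$—I will need to be careful to land exactly on the stated exponent $(d+\beta-2\alpha)/(\beta-\alpha)$.

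Second, I would verify that $V=\phi$ is a Lyapunov function in the sense of Definition~\ref{D:1}: $P_tV=P_t\phi=e^{-\lambda_1 t}\phi\le\phi=V$ for all $t>0$ (so the constant $c=0$ again), trivially, with no Jensen step needed since the power is now exactly $1$; and $\mu(V^2)=\mu(\phi^2)<\infty$, which holds by Lemma~\ref{L:up}: $\phi^2\le c(1+|x|)^{2(\alpha-d)}(\log(2+|x|))^2$ and $\int_{\R^d}(1+|x|)^{2(\alpha-d)}(\log)^2(1+|x|)^{-\beta}\,dx<\infty$ precisely when $2(d-\alpha)+\beta>d$, i.e.\ $d+\beta>2\alpha$, which is guaranteed since $\beta>\alpha$ and $d>\alpha$. (Here the condition $\beta<2\alpha$ is not needed for this integrability; it is needed to make the new exponent $(d+\beta-2\alpha)/(\beta-\alpha)$ smaller than $d/\alpha$, i.e.\ for the bound to be a genuine improvement.)

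Third, with the weighted Nash inequality for the weight $V=\phi$ and rate $\psi(r)=r^{1+(\beta-\alpha)/d}$ (or whatever the bookkeeping delivers) in hand, together with $V$ being a Lyapunov function, I would invoke Theorem~\ref{T:w1} exactly as in the proof of Proposition~\ref{P:hk} to conclude
\begin{equation*}
p_\mu(t,x,y)\le c_3\,t^{-(d+\beta-2\alpha)/(\beta-\alpha)}\,\phi(x)\phi(y),\qquad x,y\in\R^d,\ t>0,
\end{equation*}
with $c_3=c_3(\alpha,\beta)$. The main obstacle I anticipate is the first step: choosing the H\"older exponents and the power $\gamma$ of $W^{-1}$ so that (a) the Sobolev inequality applies to the high-integrability factor, (b) the low-integrability factor is controlled by $\int|u|\phi\,d\mu$ using only $\phi\gtrsim(1+|x|)^{\alpha-d}$ and $W\gtrsim(1+|x|)^\beta$, and (c) the resulting Nash rate exponent is exactly $(d+\beta-2\alpha)/(\beta-\alpha)$ after converting from the $\psi$-rate to the $t$-power via the standard Nash-to-ultracontractivity dictionary; this is a routine but delicate computation, and it is where the hypothesis $\alpha<\beta<2\alpha$ enters decisively. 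Everything downstream of that is a verbatim repeat of the arguments already used for Proposition~\ref{P:hk}.
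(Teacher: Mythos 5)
There is a genuine gap, and it sits exactly where you yourself flag the ``main obstacle'': the weighted Nash inequality with weight $\phi$ (power one) cannot be produced by the H\"older-plus-unweighted-Sobolev scheme of \eqref{e:bound}, no matter how you tune the exponents. If you split $u^2=|u|^{2\theta}\cdot|u|^{2(1-\theta)}$ and require (a) that the first H\"older factor be $\|u\|^{2\theta}_{L^{2d/(d-\alpha)}(dx)}$, so that the plain fractional Sobolev inequality applies, and (b) that the second factor collapse to $\big(\int |u|\,W^{-\gamma}\,d\mu\big)^{2(1-\theta)}$, then the exponents are forced: $\theta=d/(d+\alpha)$ and $\gamma=(d-\alpha)/(2\alpha)$. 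The reachable weight is therefore $W^{-(d-\alpha)/(2\alpha)}\le c(1+|x|)^{-\beta(d-\alpha)/(2\alpha)}$, which is bounded by $c\,\phi$ only when $\beta(d-\alpha)/(2\alpha)\ge d-\alpha$, i.e.\ $\beta\ge 2\alpha$ --- exactly the regime excluded here; for $\alpha<\beta<2\alpha$ it yields only $\phi^{\beta/(2\alpha)}$, i.e.\ Proposition \ref{P:hk} again. Your proposed repartition with $\theta=d/(d+\beta-2\alpha)$ is internally inconsistent: for $\beta<2\alpha$ one has $\theta>1$, so $1-\theta<0$ and the requirement $2(1-\theta)q=1$ has no admissible H\"older exponent; likewise your target $\gamma\beta=d-\alpha$ clashes with the forced value $\gamma=(d-\alpha)/(2\alpha)$ unless $\beta=2\alpha$. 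Interpolating between the weights $\phi^{\beta/(2\alpha)}$ and $\phi^{\beta/\alpha}$ does not produce a Nash inequality with weight $\phi$ either, so requirements (a), (b), (c) in your own list are mutually incompatible within the toolkit you allow yourself.

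The missing ingredient, which the paper uses, is the fractional Caffarelli--Kohn--Nirenberg inequality (Theorem \ref{CKN}): one performs the H\"older split adapted to the parameter $\theta=\alpha(d+\beta-2\alpha)/(\beta-\alpha)$ and controls the high-integrability factor not by the plain Sobolev inequality but by $\||x|^{\gamma}u\|^2_{L^{\tau}(\R^d;dx)}\le C\,D(u,u)$ with $\tau=2(d+\beta-2\alpha)/(d-\alpha)$ and the \emph{negative} exponent $\gamma=(\beta-2\alpha)(d-\alpha)/\bigl(2(d+\beta-2\alpha)\bigr)$; the extra decay $|x|^{\gamma}$ absorbed on the Sobolev side is precisely what lets the low-integrability factor carry the full weight $(1+|x|)^{\alpha-d}\le c\,\phi$ via Lemma \ref{l:low}, giving the weighted Nash inequality with rate $\psi(r)=c\,r^{1+(\beta-\alpha)/(d+\beta-2\alpha)}$ and hence, through Theorem \ref{T:w1}, the time factor $t^{-\theta/\alpha}=t^{-(d+\beta-2\alpha)/(\beta-\alpha)}$. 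Your remaining steps are fine and match the paper: $V=\phi$ is a Lyapunov function with constant $c=0$ since $P_t\phi=e^{-\lambda_1 t}\phi\le\phi$ (no Jensen step needed), and $\mu(\phi^2)=1$ by normalization, so the appeal to Lemma \ref{L:up} there is superfluous; but without the CKN input the central inequality of the proof is not established.
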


\begin{proof} Similar to the proof of Proposition \ref{P:nash}, we make use of weighted Nash inequalities. Set $\theta=\frac{\alpha(d+\beta-2\alpha)}{\beta-\alpha}>\alpha$. By the H\"{o}lder inequality, for all $u\in C_c^\infty(\R^d)$,
\begin{align*}\int_{\R^d}u^2\,d\mu&=\int_{\R^d} |u|^{\frac{2\theta}{\theta+\alpha}}(x)(1+|x|)^{-\frac{2\alpha(\alpha-d)}{\theta+\alpha}} \cdot|u|^{\frac{2\alpha}{\theta+\alpha}}(x)(1+|x|)^{\frac{2\alpha(\alpha-d)}{\theta+\alpha}}
\,\mu(dx)\\
&\le\left(\int_{\R^d} |u|^{\frac{2\theta}{\theta-\alpha}}(x)(1+|x|)^{-\frac{2\alpha(\alpha-d)}{\theta-\alpha}} \,\mu(dx)\right)^{\frac{\theta-\alpha}{\theta+\alpha}}\\
&\quad \times \left(\int_{\R^d}|u|(x)(1+|x|)^{\alpha-d}\,\mu(dx)\right)^{\frac{2\alpha}{\theta+\alpha}}\\
&=\left(\int_{\R^d} |u|^{\frac{2\theta}{\theta-\alpha}}(x)(1+|x|)^{-\frac{2\alpha(\alpha-d)}{\theta-\alpha}-\beta} \,dx\right)^{\frac{\theta-\alpha}{\theta+\alpha}}\\
&\quad \times \left(\int_{\R^d}|u|(x)(1+|x|)^{\alpha-d}\,\mu(dx)\right)^{\frac{2\alpha}{\theta+\alpha}}. \end{align*}
Applying Proposition \ref{CKN} with $$\tau=\frac{2\theta}{\theta-\alpha}=\frac{2(d+\beta-2\alpha)}{d-\alpha}>0$$ and $$\gamma=\frac{(\beta-2\alpha)(d-\alpha)}{2(d+\beta-2\alpha)},$$ and noticing that
$$ \gamma\tau=\beta-2\alpha=-\frac{2\alpha(\alpha-d)}{\theta-\alpha}-\beta,$$ we obtain that
$$ \left(\int_{\R^d} |u|^{\frac{2\theta}{\theta-\alpha}}(x)(1+|x|)^{-\frac{2\alpha(\alpha-d)}{\theta-\alpha}-\beta} \,dx\right)^{\frac{\theta-\alpha}{\theta}}\le \||x|^\gamma u\|^2_{L^\tau(\R^d;dx)}\le c_1D(u,u)$$ holds for some constant $c_1>0$, which is independent of $u$. Thus, combining both estimates above with Lemma \ref{l:low}, we arrive at that for all $u\in C_c^\infty(\R^d)$,
\begin{align}\label{e:ffee1}\left(\int_{\R^d}u^2\,d\mu\right)^{(\theta+\alpha)/\theta}\le& c_2 D(u,u)\left(\int_{\R^d}|u|(x)(1+|x|)^{\alpha-d}\,\mu(dx)\right)^{2\alpha/\theta}\\
\le&c_3 D(u,u)\left(\int_{\R^d}|u|(x)\phi(x)\,\mu(dx)\right)^{2\alpha/\theta}.\nonumber\end{align}  Therefore, the required assertion follows from the inequality above and Theorem \ref{T:w1}.\end{proof}
\begin{corollary}\label{e:Cor}Assume that $d\wedge\beta>\alpha$. Then there exists a constant $c_1\ge1$ such that for all $x\in \R^d$,
$$c_1^{-1}(1+|x|)^{\alpha-d}\le \phi(x)\le c_1(1+|x|)^{\alpha-d}.$$\end{corollary}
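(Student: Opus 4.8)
The lower bound $\phi(x) \ge c_1^{-1}(1+|x|)^{\alpha-d}$ is already Lemma \ref{l:low}, so the content is the matching upper bound $\phi(x) \le c_1 (1+|x|)^{\alpha-d}$. Lemma \ref{L:up} gives $\phi(x) \le c(1+|x|)^{\alpha-d}$ \emph{except} possibly in the single exceptional case $\frac{d-\beta}{\beta-\alpha} \in \Z_+$, where it carries a stray $\log(2+|x|)$ factor. So the plan is to remove that logarithm by bootstrapping, using the heat-kernel bound of Proposition \ref{P:hkc}, which is available precisely because we have now specialized to $L_0 = -(1+|x|)^\beta(-\Delta)^{\alpha/2}$ with $d\wedge\beta > \alpha$; note in particular that for $\beta \ge 2\alpha$ Lemma \ref{L:up} already gives no logarithm (the exceptional condition forces $\beta < d$, and one checks it is incompatible with $\beta \ge 2\alpha$ together with $d > \alpha$... more honestly, the genuinely delicate range is $\alpha < \beta < 2\alpha$), so it suffices to treat $\alpha < \beta < 2\alpha$.

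In that range I would use the eigenfunction identity $\phi = e^{\lambda_1 t} P_t \phi$ together with the kernel estimate from Proposition \ref{P:hkc},
\[
\phi(x) = e^{\lambda_1 t}\int_{\R^d} p_\mu(t,x,y)\phi(y)\,\mu(dy) \le c_3 e^{\lambda_1 t}\, t^{-(d+\beta-2\alpha)/(\beta-\alpha)}\,\phi(x)\int_{\R^d}\phi(y)^2\,\mu(dy),
\]
but this merely reproduces boundedness on compacts. The better route is to go back to the representation \eqref{e:phi}, $\phi(x) = c_{d,\alpha}\lambda_1\int_{\R^d}\frac{\phi(y)}{|x-y|^{d-\alpha}W(y)}\,dy$, and feed in the improved pointwise bound. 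Starting from Lemma \ref{L:up}, in the exceptional case we have $\phi(y) \le c(1+|y|)^{\alpha-d}\log(2+|y|)$; inserting this and using $W(y)\ge c_0(1+|y|)^\beta$ gives $\phi(x) \le c\int_{\R^d}\frac{\log(2+|y|)}{|x-y|^{d-\alpha}(1+|y|)^{d-\alpha+\beta}}\,dy$. Since $\beta > \alpha$, the exponent $d-\alpha+\beta > d$, so the integral converges and — by a logarithmic refinement of the estimate \eqref{p:el1} for $J(x)$, i.e. $\int \frac{\log(2+|y|)}{|x-y|^\gamma(1+|y|)^{\tau}}\,dy \le c(1+|x|)^{-\gamma}\log(2+|x|)$ when $\tau > d$ — one gets back $\phi(x) \le c(1+|x|)^{\alpha-d}\log(2+|x|)$, which is circular. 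The fix is to iterate the \emph{non-exceptional} part of the argument: after one application of \eqref{e:phi} with the crude bound $\phi(y)\le c(1+|y|)^{\alpha-\beta}$ (valid since $\beta < d$) we get $\phi(x) \le c\int \frac{dy}{|x-y|^{d-\alpha}(1+|y|)^{2\beta-\alpha}}$, and now $2\beta-\alpha > \beta > \alpha$; repeating, the decay exponent in the weight strictly increases by $\beta-\alpha$ each step, so after finitely many steps it exceeds $d$ and \eqref{p:el1} delivers $\phi(x) \le c(1+|x|)^{\alpha-d}$ with \emph{no} logarithm — the logarithm only ever appears at the single borderline step, and one extra iteration past it kills it. This is exactly the iteration already run in the proof of Lemma \ref{L:up}, carried one step further.

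The main obstacle is bookkeeping: making precise that the exceptional $\log$ in Lemma \ref{L:up} occurs at only one value of the iteration index and that iterating once more converts the "$\beta = d$" borderline case of \eqref{p:el1} into a clean "$\beta > d$" case. Concretely I would argue: let $k$ be minimal with $\beta + k(\beta-\alpha) \ge d$; if the inequality is strict we are in the non-exceptional case of Lemma \ref{L:up} and done; if equality holds (the exceptional case), apply \eqref{e:phi} once more to the bound $\phi(y)\le c(1+|y|)^{\alpha-d}\log(2+|y|)$, obtaining an integral with weight exponent $d + (\beta-\alpha) > d$ and a logarithm, to which the $\tau > d$ case of the refined $J$-estimate applies and yields $\phi(x)\le c(1+|x|)^{\alpha-d}$ outright, the logarithm being absorbed since $(1+|x|)^{-(\beta-\alpha)}\log(2+|x|)$ is bounded. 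Combining with Lemma \ref{l:low} gives the two-sided bound, hence the corollary.
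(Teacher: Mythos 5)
Your final argument takes a genuinely different route from the paper's. The paper does not return to the representation \eqref{e:phi} at all: it observes that Theorem \ref{T:main}(ii) (covering $\beta\ge 2\alpha$) together with Proposition \ref{P:hkc} (covering $\alpha<\beta<2\alpha$) gives $p_\mu(t,x,y)\le c(t)\phi(x)\phi(y)$, i.e.\ intrinsic ultracontractivity, hence by \cite[Theorem 3.2]{DS} also the lower bound $p_\mu(t,x,y)\ge c'(t)\phi(x)\phi(y)$; it then reruns the weighted Nash machinery (Theorem \ref{T:w1}) with the explicit weight $V(x)=(1+|x|)^{\alpha-d}$, which the first line of \eqref{e:ffee1} permits, to get $p_\mu(t,x,y)\le c''(t)(1+|x|)^{\alpha-d}(1+|y|)^{\alpha-d}$, and concludes by taking $x=y$. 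You instead remove the logarithm from Lemma \ref{L:up} directly, by running the Green-kernel iteration \eqref{e:phi}/\eqref{p:el1} one step past the borderline exponent. Your route is more elementary (no heat kernel, no intrinsic ultracontractivity, no Davies--Simon), it only uses $W\ge c_0(1+|x|)^\beta$ and hence in fact sharpens Lemma \ref{L:up} for the whole class \eqref{E:11}, and it requires no split between $\beta<2\alpha$ and $\beta\ge 2\alpha$; what it does not deliver is the intrinsic ultracontractivity that the paper obtains along the way and reuses afterwards (e.g.\ in the optimality remark).

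Two slips need repair, though neither is fatal. First, your parenthetical claim that the exceptional condition $\frac{d-\beta}{\beta-\alpha}\in\Z_+$ is incompatible with $\beta\ge 2\alpha$ is false (take $d=3$, $\alpha=1$, $\beta=2$), so you cannot reduce to $\alpha<\beta<2\alpha$ on those grounds; fortunately your iteration never uses $\beta<2\alpha$, so simply drop that reduction and run the same argument for every $\beta>\alpha$ with $\beta\le d$. Second, the ``refined $J$-estimate'' you actually need is the one \emph{without} the logarithm on the right-hand side, namely $\int\frac{\log(2+|y|)}{|x-y|^\gamma(1+|y|)^\tau}\,dy\le C(1+|x|)^{-\gamma}$ for $0<\gamma<d<\tau$: the version with $\log(2+|x|)$ that you state mid-proof would indeed be circular, and the closing phrase about the logarithm ``being absorbed'' gestures at the right mechanism but is not a proof. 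The cleanest fix avoids any new lemma: in the exceptional case bound $\log(2+|y|)\le C_\epsilon(1+|y|)^{\epsilon}$ with $0<\epsilon<\beta-\alpha$, so that after one more application of \eqref{e:phi} the weight exponent is $d+(\beta-\alpha)-\epsilon>d$, and the third case of \eqref{p:el1} yields $\phi(x)\le c(1+|x|)^{\alpha-d}$ outright; combined with Lemma \ref{l:low} this gives the corollary.
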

\begin{proof} The lower bound has been proven in Lemma \ref{l:low}, and so we need to verify the upper bound. According to Theorem \ref{T:main}(ii) and Proposition \ref{P:hkc}, we know that for any $t>0$, there exists a constant $c(t)>0$ such that for all $x,y\in \R^d$,
$$p_\mu(t,x,y)\le c(t) \phi(x)\phi(y).$$ Therefore, the semigroup $(P_t)_{t\ge0}$  is
intrinsically ultracontractive, see \cite[Section 3]{DS}. Furthermore, in view of \cite[Theorem 3.2]{DS}, we know that for any $t>0$, there exists a constant $c'(t)>0$ such that for all $x,y\in \R^d$,
$$c'(t)\phi(x)\phi(y)\le p_\mu(t,x,y).$$

On the other hand, by \eqref{e:ffee1} and \eqref{e:ffee1}, we can see that the so called Nash inequalities indeed hold with the weighted function $V(x)=(1+|x|)^{\alpha-d}$. Then, according to Theorem \ref{T:w1}, Propositions \ref{P:hk} and \ref{P:hkc} still hold true with $\phi$ replaced by $(1+|x|)^{\alpha-d}$. In particular, for any $t>0$, there exists a constant $c''(t)>0$ such that for all $x,y\in \R^d$,
$$p_\mu(t,x,y)\le c''(t) (1+|x|)^{\alpha-d}(1+|y|)^{\alpha-d}.$$

Combining with both conclusions above, we can get that there exists a constant $c_1>0$ such that for all $x\in \R^d$,
$$\phi(x)\le c_1(1+|x|)^{\alpha-d}.$$ The proof is complete.
\end{proof}

\begin{remark} According to Theorem \ref{T:main}(ii), for $0<t\le 1$, the factor $t^{-d/\alpha}$  among the estimates of the form $c(t)\phi(x)\phi(y)$ is optimal when $\beta\ge 2\alpha$; namely, $d/\alpha$ can not been replaced by any positive constant strictly smaller than $d/\alpha$. On the other hand, we can show that, concerning the estimates in Proposition \ref{P:hkc}, the factor $t^{-(d+\beta-2\alpha)/(\beta-\alpha)}$ is also optimal when $\alpha<\beta<2\alpha$. Indeed, by Corollary \ref{e:Cor}, there is a constant $c_1\ge1$ such that for $x\in\R^d$,
$$c_1^{-1}(1+|x|)^{\alpha-d}\le \phi(x)\le c_1(1+|x|)^{\alpha-d}.$$ Assume that
$$p_\mu(t,x,y)\le c_0t^{-\gamma}(1+|x|)^{\alpha-d}(1+|y|)^{\alpha-d},\quad t>0,x,y\in\R^d$$ holds for some $\gamma>0$. According to Theorem \ref{T:w2}, the following weighted Nash inequality
$$\left(\int_{\R^d}u^2\,d\mu\right)^{1+(1/\gamma)}\le c_1D(u,u)\left(\int_{\R^d} |u|(x)(1+|x|)^{\alpha-d}\,d\mu\right)^{2/\gamma}$$ holds for all $u\in C_c^\infty(\R^d)$. By replacing $u(x)$ with $u(\lambda x)$ for all $\lambda>0$ in the inequality above, we obtain
\begin{align*}\lambda^{\beta-\alpha-\frac{d+\beta-2\alpha}{\gamma}}&\left(\int_{\R^d}\frac{u^2(x)}{(\lambda+|x|)^\beta}\,dx\right)^{1+(1/\gamma)}\le c_1 D(u,u)\left(\int_{\R^d}\frac{|u|(x)}{(\lambda+|x|)^{\beta+d-\alpha}}\,dx\right)^{2/\gamma}.\end{align*} Note that the constant $c_1$ is independent of $\lambda$. Letting $\lambda\to 0$, the inequality above can be true only if $$\beta-\alpha-\frac{d+\beta-2\alpha}{\gamma}\ge0;$$ that is,
$$\gamma\ge \frac{d+\beta-2\alpha}{\beta-\alpha}.$$ We also note that
$$\frac{d+\beta-2\alpha}{\beta-\alpha}>\frac{d}{\alpha}$$ if and only if $2\alpha>\beta$.\end{remark}

\ \

Finally, we present a conclusion remark.

\begin{remark} This paper is concerned with weighted fractional Laplacian operators of the form $L=-W(x)(-\Delta)^{\alpha/2}$ on $\R^d$, where $\alpha\in(0,2)$ and $d>\alpha$. Part arguments can be extended to some cases beyond $d>\alpha$. For example, Theorem \ref{T:com1}(i) holds true for all $\alpha\neq d$, if we apply the Hardy inequality for the fractional Laplacian on $\R^d$ with $\alpha\neq d$, see also \cite[Theorem 1.1]{D04}. However, proofs of key statements, including Theorem \ref{T:com1}(ii), Lemmas \ref{l:low} and \ref{L:up}, are heavily based on the framework of $d>\alpha$. In particular, in Lemmas \ref{l:low} and \ref{L:up} we used Green function estimates for fractional Laplacian operators on $\R^d$, which are only available when $d>\alpha$. Motivated by results for the Laplacian operator with unbounded diffusion coefficient in $\R$ and $\R^2$ developed in \cite{MS13}, we believe that the corresponding analytic properties and their approaches for fractional heat semigroups in the case that $d\le \alpha$ should be
different from those in the present setting. We will discuss these issues in future work. \end{remark}

\section{Appendix}
\subsection{Weighted Nash inequalities}
The main tool to derive upper bounds of heat kernel  is the following weighted Nash inequalities, which are originally due to F.-Y. Wang, see \cite[Theorem 3.3]{Wang02}. We recall statements from \cite{BBGM} adopted to our situation. Let $(P_t)_{t\ge0}$ be a symmetric Markov semigroup on $L^2(\R^d;\mu)$ with generator $L$.
The following definition is taken from \cite[Section 2.1]{MS12}.

\begin{definition}\label{D:1}\it A Lyapunov function $V$ is a positive function on $\R^d$ such that $V\in L^2(\R^d;\mu)$ and
$$P_tV(x)\le e^{ct}V(x),\quad x\in \R^d, t>0$$ for some constant $c\ge0$. Here, $c$ is called the Lyapunov constant. \end{definition}

Following \cite[Remark 1]{MS12}, we give some remark for Definition \ref{D:1}. In \cite[Definition 3.3]{BBGM}, the Lyapunov function $V$ is required to be in the domain of the operator $L$ such that $LV\le cV$. Such condition is not so easy to verify in applications. Indeed, according to the proof of \cite[Theorem 3.5]{BBGM} (see Theorem \ref{T:w1} below), such assumption is only used to ensure the validity of \cite[(3.5)]{BBGM} for all positive functions $f\in L^2(\R^d;\mu)$. Observe that, if $V$ is a Lyapunov function in the sense of Definition \ref{D:1}, then, for all positive functions $f\in L^2(\R^d;\mu)$,
\begin{align*}\int_{\R^d} VP_t f\,d\mu=\int_{\R^d} f P_tV\,d\mu\le e^{ct}\int_{\R^d} fV\,d\mu.\end{align*} That is, \cite[(3.5)]{BBGM} holds true.

\begin{definition}(\cite[Definition 3.2]{BBGM}) \it Let $V$ be a positive function on $\R^d$ and $\psi$ be a positive function defined on $(0,\infty)$ such that $r\mapsto\frac{\psi(r)}{r}$ is increasing on $(0,\infty)$. A Dirichlet form $(D,\mathscr{D}(D))$ on $L^2(\R^d;\mu)$ satisfies a weighted Nash inequality with weight $V$ and rate function $\psi$, if
$$\psi\left(\frac{ \|u \|^2_{L^2(\R^d;\mu)}}{\,\,\,\|u V\|^2_{L^1(\R^d;\mu)}}\right)\le \frac{D(f,f)}{\quad\,\,\|u V\|^2_{L^1(\R^d;\mu)}}$$ for all functions $u \in \mathscr{D}(D)$ such that $\|u \|_{L^2(\R^d;\mu)}>0$ and $\|u V\|_{L^1(\R^d;\mu)}<\infty.$ \end{definition}

\begin{theorem}\label{T:w1}$($\cite[Theorem 3.5 and Corollary 3.7]{BBGM}$)$
Assume that there exists a Lyapunov function $V$ in $L^2(\R^d;\mu)$ with Lyapunov constant $c\ge0$,
and that the Dirichlet form $(D,\mathscr{D}(D))$ associated to $L$ satisfies a weighted Nash
inequality with weight $V$ and rate function $\psi$ on $(0,\infty)$ such that
$$\int_1^\infty \frac{1}{\psi(r)}\,dr<\infty\quad \mathrm{and   }\,\,\,\, \int_0^1 \frac{1}{\psi(r)}\,dr=\infty.$$
Then
$$\|P_tf\|^2_{L^2(\R^d;\mu)}\le \Psi^{-1}(2t) e^{2ct}\|fV\|^2_{L^1(\R^d;\mu)}$$ for all $t>0$ and all functions $f\in L^2(\R^d;\mu)$. Here, the function $\Psi$ is defined by
$$\Psi(t)=\int_t^\infty \frac{1}{\psi(r)}\,dr$$ and $\Psi^{-1}(t)=\inf\{r>0: \Phi(r)\le t\}.$ In particular, $(P_t)_{t\ge0}$ has a kernel $p_\mu(t,x,y)$ with respect to $\mu$ which satisfies
$$p_\mu(t,x,y)\le \Psi^{-1}(t)e^{ct}V(x)V(y)$$ for all $t>0$ and $x,y\in \R^d\times \R^d$.  \end{theorem}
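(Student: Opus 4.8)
The plan is to run the standard Nash-type semigroup argument, adapted to the weight $V$, and then upgrade the resulting $L^1(V\,d\mu)\to L^2(\mu)$ smoothing estimate to a pointwise kernel bound by duality and the semigroup property. First I would note that the two integral conditions on $\psi$ make $\Psi(t)=\int_t^\infty\psi(r)^{-1}\,dr$ a finite, strictly decreasing, continuous bijection of $(0,\infty)$ onto itself, so that $\Psi^{-1}$ is unambiguously defined on $(0,\infty)$ and $\Psi(\rho)\ge 2t\iff\rho\le\Psi^{-1}(2t)$. Since $P_t$ is positivity preserving, $|P_tf|\le P_t|f|$ and $\||f|V\|_{L^1(\mu)}=\|fV\|_{L^1(\mu)}$, so it suffices to prove the first inequality for $0\le f\in L^2(\mu)$, and by homogeneity I may assume $\|fV\|_{L^1(\mu)}=1$.

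The core would be a differential inequality for $\Lambda(t):=\|P_tf\|_{L^2(\mu)}^2$. For $t>0$ one has $P_tf\in\mathscr{D}(L)\subset\mathscr{D}(D)$ and $t\mapsto P_tf$ is $L^2(\mu)$-differentiable, hence $\Lambda'(t)=2\langle LP_tf,P_tf\rangle_{L^2(\mu)}=-2D(P_tf,P_tf)$. The Lyapunov property, used in the integrated form $\int (P_tf)V\,d\mu=\int f\,(P_tV)\,d\mu\le e^{ct}\|fV\|_{L^1(\mu)}$ (precisely the reformulation of Definition \ref{D:1} discussed in the paper, valid for all nonnegative $f\in L^2(\mu)$), gives $\|(P_tf)V\|_{L^1(\mu)}\le e^{ct}$. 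Putting $u=P_tf$ in the weighted Nash inequality therefore yields
\begin{equation*}
-\tfrac12\Lambda'(t)=D(P_tf,P_tf)\ge\|(P_tf)V\|_{L^1(\mu)}^2\,\psi\!\left(\frac{\Lambda(t)}{\|(P_tf)V\|_{L^1(\mu)}^2}\right),
\end{equation*}
and since $r\mapsto\psi(r)/r$ is increasing and $\|(P_tf)V\|_{L^1(\mu)}^2\le e^{2ct}$, the right-hand side is bounded below by $e^{2ct}\psi(e^{-2ct}\Lambda(t))$; hence $-\Lambda'(t)\ge 2e^{2ct}\psi(e^{-2ct}\Lambda(t))$.

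Next I would set $G(t):=e^{-2ct}\Lambda(t)$, so that $G'(t)=e^{-2ct}(\Lambda'(t)-2c\Lambda(t))$ and the last inequality becomes $-G'(t)\ge 2\psi(G(t))+2cG(t)\ge 2\psi(G(t))$. Thus $G$ is nonincreasing with $-G'(t)/\psi(G(t))\ge2$; integrating over $(0,t)$ (from $\varepsilon$ to $t$, then $\varepsilon\downarrow0$) and substituting $r=G(s)$ gives $\int_{G(t)}^{G(0)}\psi(r)^{-1}\,dr\ge2t$, and since $G(0)=\|f\|_{L^2(\mu)}^2<\infty$ this gives $\Psi(G(t))\ge2t$, i.e.\ $G(t)\le\Psi^{-1}(2t)$. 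Undoing the normalization yields $\|P_tf\|_{L^2(\mu)}^2\le e^{2ct}\Psi^{-1}(2t)\|fV\|_{L^1(\mu)}^2$, which is the first claim. For the kernel bound, write $C(t)=(e^{2ct}\Psi^{-1}(2t))^{1/2}$; the estimate just proven reads $\|P_tf\|_{L^2(\mu)}\le C(t)\|fV\|_{L^1(\mu)}$, and dualizing through the $\mu$-self-adjointness of $P_t$ gives $\|V^{-1}P_tg\|_{L^\infty(\mu)}\le C(t)\|g\|_{L^2(\mu)}$. Composing these along $P_{2t}=P_t\circ P_t$ yields $|P_{2t}f(x)|\le C(t)^2V(x)\int|f|V\,d\mu$ for $\mu$-a.e.\ $x$; testing against approximate identities identifies the kernel and gives $p_\mu(2t,x,y)\le C(t)^2V(x)V(y)$, that is, $p_\mu(t,x,y)\le e^{ct}\Psi^{-1}(t)V(x)V(y)$.

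The step I expect to need the most care is the differential inequality: justifying $\Lambda'(t)=-2D(P_tf,P_tf)$ for every $t>0$ and $f\in L^2(\mu)$ through analyticity of the semigroup, checking that $P_tf$ is an admissible test function in the weighted Nash inequality along the whole orbit, and — as emphasized in the discussion following Definition \ref{D:1} — replacing the pointwise requirement $LV\le cV$ by the integrated inequality $\int VP_tf\,d\mu\le e^{ct}\int fV\,d\mu$ for nonnegative $f\in L^2(\mu)$. Once this is in place, the remaining arguments are routine ODE comparison and duality.
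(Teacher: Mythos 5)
Your argument is correct and is essentially the standard proof of the cited result: the paper itself gives no proof of Theorem \ref{T:w1}, quoting it from \cite{BBGM}, and your reconstruction follows exactly that route (differential inequality for $\|P_tf\|_{L^2(\mu)}^2$ via the weighted Nash inequality, ODE comparison through $\Psi$, then duality and $P_{2t}=P_t\circ P_t$ for the kernel bound). You also correctly implement the one adaptation the paper does discuss after Definition \ref{D:1}, namely that the pointwise condition $LV\le cV$ of \cite{BBGM} can be replaced by the integrated inequality $\int V\,P_tf\,d\mu\le e^{ct}\int fV\,d\mu$ for nonnegative $f\in L^2(\mu)$, which is all the argument uses.
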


Theorem  \ref{T:w1} has the following converse.
\begin{theorem}\label{T:w2}$($\cite[Theorem 3.9 and Proposition 3.1]{BBGM}$)$ Let $(P_t)_{t\ge0}$ be a symmetric Markov semigroup on $L^2(\R^d;\mu)$ which has a kernel $p_\mu(t,x,y)$ with respect to $\mu$ such that
$$p_\mu(t,x,y)\le \Psi(t)V(x)V(y)$$ for all $t>0$ and $x,y\in \R^d\times \R^d$, where $V$ is a positive function on $\R^d$ and $\Psi$ is a positive function on $(0,\infty)$. Then, the weighted Nash inequality holds with this function $V$ and rate function $\psi$ as follows:
$$\psi(r)=\sup_{t>0}\frac{r}{t}\log \frac{r}{\Psi(t)}.$$
 \end{theorem}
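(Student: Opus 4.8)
The plan is to read the weighted Nash inequality off of two estimates for the quadratic quantity $t\mapsto\langle P_tu,u\rangle_{L^2(\R^d;\mu)}$: an \emph{upper} bound supplied directly by the assumed heat kernel estimate, and a \emph{lower} bound supplied by the spectral theorem; optimizing the resulting inequality over $t>0$ reproduces exactly the rate function $\psi$ in the statement. Throughout, write $\|\cdot\|_p$ for the norm of $L^p(\R^d;\mu)$, and write $D$ for the Dirichlet form associated to $L$. Since both sides of the claimed inequality are invariant under $u\mapsto cu$ with $c\neq0$, it suffices to treat $u\in\mathscr{D}(D)$ with $\|u\|_2>0$ and $\|uV\|_1<\infty$, the inequality being vacuous otherwise.

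For the upper bound, I would use $p_\mu\ge0$, the hypothesis $p_\mu(t,x,y)\le\Psi(t)V(x)V(y)$, and Fubini's theorem (legitimate because the majorant below is finite, using $\|uV\|_1<\infty$) to get, for every $t>0$,
$$\langle P_tu,u\rangle=\int_{\R^d}\!\int_{\R^d}p_\mu(t,x,y)u(x)u(y)\,\mu(dx)\,\mu(dy)\le\Psi(t)\Big(\int_{\R^d}|u|V\,d\mu\Big)^2=\Psi(t)\,\|uV\|_1^2.$$
For the lower bound, let $-L=\int_{[0,\infty)}\lambda\,dE_\lambda$ be the spectral resolution of the nonnegative self-adjoint generator, and put $\rho(d\lambda):=d\langle E_\lambda u,u\rangle$. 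Then $\rho$ is a finite measure of total mass $\|u\|_2^2$, one has $\int_{[0,\infty)}\lambda\,\rho(d\lambda)=D(u,u)<\infty$ since $u$ lies in the form domain $\mathscr{D}(D)=\mathscr{D}((-L)^{1/2})$, and $\langle P_tu,u\rangle=\int_{[0,\infty)}e^{-\lambda t}\,\rho(d\lambda)$. Applying Jensen's inequality to the convex function $\lambda\mapsto e^{-\lambda t}$ against the probability measure $\rho/\|u\|_2^2$, whose barycenter is $D(u,u)/\|u\|_2^2$, yields
$$\langle P_tu,u\rangle\ \ge\ \|u\|_2^2\,\exp\!\Big(-t\,\frac{D(u,u)}{\|u\|_2^2}\Big),\qquad t>0.$$

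Combining the two displays and setting $r:=\|u\|_2^2/\|uV\|_1^2$, for every $t>0$ we obtain $r\le\Psi(t)\exp\!\big(t\,D(u,u)/\|u\|_2^2\big)$, equivalently $\log\big(r/\Psi(t)\big)\le t\,D(u,u)/\|u\|_2^2$; multiplying through by $r/t$ gives $\tfrac{r}{t}\log\tfrac{r}{\Psi(t)}\le D(u,u)/\|uV\|_1^2$. Taking the supremum over $t>0$ produces $\psi(r)\le D(u,u)/\|uV\|_1^2$, which is precisely the weighted Nash inequality with weight $V$ and rate function $\psi$.

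I do not expect a serious obstacle; the argument is short. The points calling for a little care are: the integrability/measurability making the double-integral representation of $\langle P_tu,u\rangle$ legitimate (handled by finiteness of the majorant $\Psi(t)\|uV\|_1^2$); the identity $\int\lambda\,\rho(d\lambda)=D(u,u)$ for $u$ only in the form domain rather than in $\mathscr{D}(L)$ (which is exactly the statement $D(u,u)=\|(-L)^{1/2}u\|_2^2$, so the spectral-theoretic lower bound is the step one must phrase carefully); and the verification that the $\psi$ produced is an admissible rate function, i.e.\ $r\mapsto\psi(r)/r$ is increasing — immediate since $\psi(r)/r=\sup_{t>0}\tfrac1t\log\tfrac{r}{\Psi(t)}$ is a supremum of functions increasing in $r$.
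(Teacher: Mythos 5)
Your argument is correct. Note that the paper does not prove Theorem \ref{T:w2} at all --- it is quoted verbatim from \cite{BBGM} (Theorem 3.9 and Proposition 3.1) --- and your proof is essentially the standard one behind that cited result: the kernel bound gives $\langle P_tu,u\rangle_{L^2(\R^d;\mu)}\le \Psi(t)\,\|uV\|_{L^1(\R^d;\mu)}^2$, the spectral theorem plus Jensen (equivalently, log-convexity of $t\mapsto\langle P_tu,u\rangle_{L^2(\R^d;\mu)}$ with slope $-D(u,u)$ at $t=0$) gives $\langle P_tu,u\rangle_{L^2(\R^d;\mu)}\ge \|u\|_{L^2(\R^d;\mu)}^2\exp\bigl(-tD(u,u)/\|u\|_{L^2(\R^d;\mu)}^2\bigr)$, and combining the two and taking the supremum over $t>0$ yields precisely the rate function $\psi(r)=\sup_{t>0}\frac{r}{t}\log\frac{r}{\Psi(t)}$, with the technical points (Tonelli via the finite majorant, $D(u,u)=\|(-L)^{1/2}u\|_{L^2(\R^d;\mu)}^2$ on the form domain, monotonicity of $\psi(r)/r$) handled as you indicate.
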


\subsection{Heat kernel estimates and eigenvalues}
Let $0<\lambda_1<\lambda_2\le \cdots\lambda_n\le \cdots$ be eigenvalues of $-L$, repeated according to their multiplicity, and $\phi_n$ be the corresponding eigenfunctions, which we assume to be normalized in $L^2(\R^d;\mu)$. Then, for any $f\in L^2(\R^d;\mu)$,
$$P_tf(x)=\sum_{n=1}^\infty c_ne^{-\lambda_nt}\phi_n(x),\quad t>0,x\in \R^d,$$ where $c_n=\int_{\R^d} f(z)\phi_n(z)\,\mu(dz).$ Therefore, for any $t>0$ and $x,y\in \R^d$,
\begin{align*}p_\mu(t,x,y)&=\int_{\R^d} p_\mu(t/2,x,z)p_\mu(t/2,y,z)\,\mu(dz)=P_{t/2} p(t/2,y,\cdot)(x)\\
&=\sum_{n=1}^\infty e^{-\lambda_n t/2}\phi_n(x)\int_{\R^d} p_\mu(t/2,y,z)\phi_n(z)\,\mu(dz)\\
&=\sum_{n=1}^\infty e^{-\lambda_n t/2}\phi_n(x)P_{t/2}\phi_n(y)=\sum_{n=1}^\infty e^{-\lambda_nt}\phi_n(x)\phi_n(y),\end{align*} where in the last equality we used the fact that $$P_t\phi_n(x)=e^{-\lambda_n t}\phi_n(x),\quad t>0,x\in\R^d.$$ In particular, it holds that for all $t>0$,
\begin{equation}\label{e:eigen}\int_{\R^d} p_\mu(t,x,x)\,\mu(dx)=\sum_{n=1}^\infty e^{-\lambda_nt}.\end{equation}
Note that \eqref{e:eigen} is well known in the spectral theory, and has been used to estimate higher order eigenvalues, see e.g.\ \cite[(2.1)]{CP}, \cite[Lemma 3.3.18 and Theorem 3.3.19]{WBook} or \cite[Proposition 4.1]{MS12-2}.

According to \eqref{e:eigen} and \cite[Proposition 7.2]{MS12-2}, we have

\begin{proposition}\label{P:ei1}Suppose that there are constants $\theta>0$ and $C_1\ge1$ such that
$$C_1^{-1}\le \liminf_{t\to0} \frac{\int_{\R^d} p_\mu(t,x,x)\,\mu(dx)}{t^{-\theta}}\le \limsup_{t\to0} \frac{\int_{\R^d} p_\mu(t,x,x)\,\mu(dx)}{t^{-\theta}}\le C_1.$$ Then,
$$C_2^{-1}\le \liminf_{n\to\infty}\frac{\lambda_n}{n^{1/\theta}}\le \limsup_{n\to\infty}\frac{\lambda_n}{n^{1/\theta}}\le C_2$$ for some constant $C_2\ge1$.\end{proposition}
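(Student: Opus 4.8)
The plan is to translate the two-sided small-time asymptotics of the trace $\sum_n e^{-\lambda_n t}$ into two-sided asymptotics of the counting function $N(\Lambda):=\#\{n:\lambda_n\le\Lambda\}$, and then invert. This is a standard Tauberian-type argument, so I would follow the route of \cite[Proposition 7.2]{MS12-2}. First, set $Z(t):=\int_{\R^d}p_\mu(t,x,x)\,\mu(dx)=\sum_{n=1}^\infty e^{-\lambda_n t}$, using \eqref{e:eigen}; the hypothesis says $C_1^{-1}\le\liminf_{t\to0}t^\theta Z(t)\le\limsup_{t\to0}t^\theta Z(t)\le C_1$.

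For the \emph{upper} bound on $\lambda_n$ (equivalently, a lower bound on $N(\Lambda)$), I would use the elementary inequality, valid for any fixed $t>0$,
$$
N(\Lambda)\,e^{-\Lambda t}\le\sum_{\lambda_n\le\Lambda}e^{-\lambda_n t}\le Z(t),
$$
so that $N(\Lambda)\le e^{\Lambda t}Z(t)$; choosing $t=1/\Lambda$ and using $Z(1/\Lambda)\le 2C_1\Lambda^\theta$ for $\Lambda$ large gives $N(\Lambda)\le 2eC_1\Lambda^\theta$. Plugging $\Lambda=\lambda_n$ and noting $N(\lambda_n)\ge n$ yields $n\le 2eC_1\lambda_n^\theta$, i.e.\ $\lambda_n\ge c\,n^{1/\theta}$ for large $n$; adjusting the constant to cover all $n\ge1$ (all $\lambda_n>0$, and only finitely many $n$ are exceptional) gives one of the two bounds. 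For the \emph{lower} bound on $\lambda_n$ (upper bound on $N(\Lambda)$) I would exploit the lower asymptotic $Z(t)\ge (2C_1)^{-1}t^{-\theta}$ for small $t$ together with the splitting $Z(t)=\sum_{\lambda_n\le\Lambda}e^{-\lambda_n t}+\sum_{\lambda_n>\Lambda}e^{-\lambda_n t}\le N(\Lambda)+\sum_{\lambda_n>\Lambda}e^{-\lambda_n t}$, and control the tail $\sum_{\lambda_n>\Lambda}e^{-\lambda_n t}$ by the already-established polynomial bound on $N$ via summation by parts (or an integral comparison $\int_\Lambda^\infty e^{-ut}\,dN(u)$, using $N(u)\le Cu^\theta$): this tail is at most $C'(\Lambda^\theta e^{-\Lambda t}+t^{-\theta}\Gamma(\theta,\Lambda t))$, which for a suitable choice $\Lambda t=M$ with $M$ a large fixed constant is a small multiple of $t^{-\theta}$. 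Subtracting, one gets $N(\Lambda)\ge c'\,t^{-\theta}=c'M^{-\theta}\Lambda^\theta$ for all large $\Lambda$, hence $n=N(\lambda_n)\ge c'M^{-\theta}\lambda_n^\theta$ would be the wrong direction — instead one reads off $N(\Lambda)\le C\Lambda^\theta$ from the first part and $N(\Lambda)\ge c\Lambda^\theta$ from this part, and then inverts: since $N$ is nondecreasing and right-continuous with $N(\lambda_n^-)<n\le N(\lambda_n)$, the two-sided bound $c\Lambda^\theta\le N(\Lambda)\le C\Lambda^\theta$ for large $\Lambda$ forces $C_2^{-1}n^{1/\theta}\le\lambda_n\le C_2 n^{1/\theta}$ for large $n$, and one absorbs the finitely many small $n$ into the constant.

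The main obstacle is the tail estimate in the lower-bound half: one needs the a priori polynomial upper bound on $N(\Lambda)$ (from the easy direction) to be in hand \emph{before} bounding $\sum_{\lambda_n>\Lambda}e^{-\lambda_n t}$, and then to choose the coupling $\Lambda t=M$ uniformly so that the tail contributes, say, at most $\tfrac12(2C_1)^{-1}t^{-\theta}$; verifying that such an $M$ exists is where the incomplete-Gamma asymptotics $\Gamma(\theta,M)\to0$ as $M\to\infty$ enter. Everything else is bookkeeping with monotone functions and adjusting multiplicative constants. I would therefore structure the write-up as: (1) record $Z(t)=\sum e^{-\lambda_n t}$ and the hypothesis; (2) prove $N(\Lambda)\le C\Lambda^\theta$ for large $\Lambda$ via $N(\Lambda)e^{-\Lambda t}\le Z(t)$ at $t=1/\Lambda$; (3) prove $N(\Lambda)\ge c\Lambda^\theta$ for large $\Lambda$ via the tail-splitting plus step (2); (4) invert to get the claimed bounds on $\lambda_n$, citing \cite[Proposition 7.2]{MS12-2} for the details that are purely routine.
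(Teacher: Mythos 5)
Your proposal is correct and follows essentially the same route as the paper: the paper's own ``proof'' consists of recording the trace identity \eqref{e:eigen} and citing \cite[Proposition 7.2]{MS12-2}, and your counting-function/Tauberian argument (upper bound on $N(\Lambda)$ from $N(\Lambda)e^{-\Lambda t}\le Z(t)$, lower bound from the tail-splitting with $\Lambda t=M$ fixed large, then inversion) is precisely the content of that cited result. The only blemish is the labeling at the start: the bound $N(\Lambda)\le C\Lambda^{\theta}$ gives the \emph{lower} bound $\lambda_n\ge c\,n^{1/\theta}$ (not the upper one), which your own step (4) in fact uses correctly.
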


\subsection{Fractional Caffarelli-Kohn-Nirenberg inequalities}
The following statement follows from \cite[Theorem 1.1, i)]{NS}.
\begin{theorem}\label{CKN} Let $d\ge1$, $\tau>0$ and $\gamma\in \R$ such that $$\frac{1}{\tau}+\frac{\gamma}{d}=\frac{1}{2}-\frac{\alpha}{2d}.$$ Then there exists a constant $C>0$ such that for all $u\in C_c^1(\R^d)$,
$$\||x|^\gamma u\|^2_{L^\tau(\R^d;dx)}\le C D(u,u).$$ \end{theorem}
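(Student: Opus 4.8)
The plan is to derive the stated inequality as a specialisation of the general fractional Caffarelli--Kohn--Nirenberg inequality proved in \cite{NS}. First I would record that, by the definition of the $\alpha$-stable Dirichlet form recalled earlier in the paper, $D(u,u)=\tfrac{c_{d,\alpha}}2[u]^2_{\dot W^{\alpha/2,2}(\R^d)}$, where $[u]^2_{\dot W^{\alpha/2,2}(\R^d)}=\int_{\R^d}\int_{\R^d}\frac{|u(x)-u(y)|^2}{|x-y|^{d+\alpha}}\,dx\,dy$; thus the claim is equivalent to $\||x|^\gamma u\|^2_{L^\tau(\R^d;dx)}\le C'[u]^2_{\dot W^{\alpha/2,2}(\R^d)}$ for $u\in C_c^1(\R^d)$. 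I would then apply \cite[Theorem 1.1, i)]{NS} with smoothness parameter $s=\alpha/2\in(0,1)$ and exponent $2$ (and no radial weight) on the Gagliardo seminorm, and target Lebesgue exponent $\tau$ with radial weight $|x|^\gamma$ on the left-hand side. The dimensional (scaling) balance condition of that theorem reads precisely $\tfrac d\tau+\gamma=\tfrac{d-\alpha}2$, i.e.\ $\tfrac1\tau+\tfrac\gamma d=\tfrac12-\tfrac\alpha{2d}$, which is our hypothesis; absorbing $c_{d,\alpha}$ into the constant then gives the result.

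The remaining work is to check the side hypotheses of \cite[Theorem 1.1]{NS}. Positivity $\tau>0$ is assumed. Local $L^\tau$-integrability of $|x|^{\gamma\tau}$ near the origin is the condition $\gamma\tau>-d$; solving the balance relation for $\tau=\tfrac{2d}{d-\alpha-2\gamma}$ one sees that $\gamma\tau>-d$ is equivalent to $d>\alpha$, which is a standing assumption. One must also verify that $\tau$ lies in the range for which part~i) is stated: in every instance in which Theorem~\ref{CKN} is invoked here (Propositions~\ref{P:nash} and \ref{P:hkc}) one has $2\le\tau\le\tfrac{2d}{d-\alpha}$ and $-\tfrac\alpha2\le\gamma\le0$, so the cited statement applies directly.

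I should note a self-contained alternative covering exactly that applied range $-\tfrac\alpha2\le\gamma\le0$, which avoids \cite{NS} altogether: interpolate between the two endpoints $(\tau,\gamma)=(\tfrac{2d}{d-\alpha},0)$, given by the fractional Sobolev inequality $\|u\|^2_{L^{2d/(d-\alpha)}(\R^d;dx)}\le CD(u,u)$, and $(\tau,\gamma)=(2,-\tfrac\alpha2)$, given by the fractional Hardy inequality $\int_{\R^d}\frac{u^2}{|x|^\alpha}\,dx\le CD(u,u)$ valid for $d>\alpha$ (already cited as \cite[Theorem 1.1]{D04}/\cite[Proposition 5]{BDK}). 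Writing $t=-2\gamma/\alpha\in[0,1]$ and $\tfrac1\tau=\tfrac{1-t}{2d/(d-\alpha)}+\tfrac t2$ — a choice which is automatically compatible with the balance relation — Hölder's inequality with a change of weight gives $\||x|^\gamma u\|_{L^\tau}\le\|u\|_{L^{2d/(d-\alpha)}}^{1-t}\,\||x|^{-\alpha/2}u\|_{L^2}^{t}$, and squaring and inserting the two endpoint inequalities yields $\||x|^\gamma u\|^2_{L^\tau}\le C D(u,u)$.

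There is no substantial analytic obstacle here: the inequality is known, and the task is bookkeeping. The only point requiring care is matching the parameter conventions of \cite{NS} to ours and confirming that part~i) of their Theorem~1.1 covers the precise configuration we use (no weight on the seminorm, $p=2$, $s=\alpha/2$, with the balance relation as the sole nontrivial constraint in the relevant range); in the self-contained variant, the corresponding point is checking that the interpolation parameter $t$ produced by the balance relation lies in $[0,1]$ exactly when $\gamma\in[-\alpha/2,0]$, which it does.
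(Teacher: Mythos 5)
Your first route coincides with the paper's: Theorem \ref{CKN} is given there with no proof beyond the citation of \cite[Theorem 1.1, i)]{NS}, so your parameter matching ($s=\alpha/2$, exponent $2$, no weight on the Gagliardo seminorm, balance relation $\tfrac d\tau+\gamma=\tfrac{d-\alpha}2$, constant $c_{d,\alpha}$ absorbed) is the same argument with the bookkeeping made explicit. What you add is worthwhile on two counts. First, you correctly observe that the statement as printed, with only the balance condition on $(\tau,\gamma)$, tacitly relies on the side restrictions of \cite{NS}, and you verify that the configurations actually used in the paper fall in the admissible Hardy--Sobolev range: in Proposition \ref{P:hkc} one has $\gamma=\tfrac{(\beta-2\alpha)(d-\alpha)}{2(d+\beta-2\alpha)}\in(-\alpha/2,0)$ and $2\le\tau\le\tfrac{2d}{d-\alpha}$ whenever $\alpha<\beta<2\alpha$, while Proposition \ref{P:nash} only uses the Sobolev endpoint $\gamma=0$. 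Second, your self-contained alternative is a genuinely different and correct proof on that applied range: writing $|x|^\gamma|u|=|u|^{1-t}\bigl(|x|^{-\alpha/2}|u|\bigr)^t$ with $t=-2\gamma/\alpha\in[0,1]$, the balance relation forces $\tfrac1\tau=\tfrac{(1-t)(d-\alpha)}{2d}+\tfrac t2$, so H\"older's inequality gives $\||x|^\gamma u\|_{L^\tau}\le\|u\|^{1-t}_{L^{2d/(d-\alpha)}}\||x|^{-\alpha/2}u\|^t_{L^2}$, and inserting the fractional Sobolev inequality and the fractional Hardy inequality (\cite[Theorem 1.1]{D04}, \cite[Proposition 5]{BDK}, valid since $d>\alpha$) yields $\||x|^\gamma u\|^2_{L^\tau}\le CD(u,u)$. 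This elementary interpolation makes the appeal to \cite{NS} dispensable for everything the paper needs, at the cost of not delivering the full Caffarelli--Kohn--Nirenberg family; the citation buys generality, your interpolation buys transparency and a proof entirely within tools already quoted in the paper.
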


\noindent{\bf Acknowledgements.} The author would like to think the referee for helpful comments and careful corrections. The research is supported by National
Natural Science Foundation of China (No.\ 11522106), the Fok Ying Tung
Education Foundation (No.\ 151002) and the Program for Probability and Statistics: Theory and Application (No. IRTL1704).

\end{document}